%
%
%
%
\documentclass{amsart}
\usepackage{amssymb,amsmath}
\usepackage{hyperref}
\usepackage{appendix}
\usepackage{paralist}
\usepackage{graphics}
\usepackage{epsfig}
\usepackage{graphicx}
\usepackage{epstopdf}
\usepackage[]{algorithm2e}
\usepackage{graphicx,amsxtra,amssymb, tikz}
\usepackage{amsmath}
\usepackage{epsfig}
\usepackage{hyperref}
\usepackage{appendix}
\usepackage{setspace}
\usepackage{MnSymbol,wasysym}
%
%
%
%
%
%
%
\newtheorem{thm}{Theorem}[section]
\newtheorem{remm}{Remark}[section]
\newtheorem{lem}{Lemma}[section]


\newtheorem{remark}[remm]{Remark}

\newtheorem{proposition}[thm]{Proposition}

\numberwithin{equation}{section}
%

%

%


\newcommand{\rset}{{\mathbb R}}
\newcommand{\nset}{{\mathbb N}}

%
\newcommand{\ken}{\ \ }

\newcommand{\btheta}{\boldsymbol\theta}
\newcommand{\brho}{\boldsymbol\rho}
\newcommand{\bhta}{\boldsymbol\eta}
\newcommand{\ee}{{\bf e}}
\newcommand{\ssy}{\scriptscriptstyle}

\newcommand{\uu}{u}
\newcommand{\UUU}{U}

\newcommand{\gff}{{\mathfrak n}}
\newcommand{\mfg}{{\mathfrak g}}

\newcommand{\half}{\frac{1}{2}}
\newcommand{\quarter}{\frac{1}{4}}
\newcommand{\dspace}{{\mathfrak X}^{\circ}_{h}}
\newcommand{\gspace}{{\mathfrak X}_h}
\newcommand{\sspace}{{\mathfrak S}_h}
\newcommand{\ddelta}{\delta_{\star}}

\newcommand{\PP}{{\sf I}^{\circ}_h}
%
%
\pagestyle{plain}
\setlength{\textheight}{21.5truecm}
\setlength{\textwidth}{15.0truecm}
\setlength{\oddsidemargin}{0.00truecm}
\setlength{\evensidemargin}{0.00truecm}
%
%
%
\begin{document}
\title[]
{Error Estimation of the Besse Relaxation Scheme\\
 for a Semilinear Heat Equation}
\author[]{Georgios E. Zouraris$^{\ddag}$}
\thanks
{$^{\ddag}$ Department of Mathematics and Applied Mathematics,
University of Crete, GR-700 13 Panepistimioupolis, Heraklion, Crete, Greece.}
\subjclass{65M12, 65M60}
\keywords {Besse relaxation method, semilinear heat equation,
finite differences, Dirichlet boundary conditions, optimal order error estimates}
%
%
%
%
\begin{abstract}
%
%
The solution to the initial and Dirichlet
boundary value problem for a semilinear, one dimensional heat
equation is approximated by a numerical method that combines the
Besse relaxation scheme in time
(C. R. Acad. Sci. Paris S{\'e}r. I, vol. 326 (1998))
with a central finite difference method in space.
A new, composite stability argument is developed, leading to
an optimal, second-order error estimate in the discrete
$L_t^{\infty}(H_x^1)-$norm.
%
%
It is the first time in the literature where an error estimate for fully
discrete approximations based on the Besse relaxation scheme is
provided.
%
%
\end{abstract}
\maketitle
%
%
%
%
%
%
%
%
\section{Introduction}
\subsection{Formulation of the problem}
Let $T>0$, $x_a$, $x_b\in{\mathbb R}$ with $x_b>x_a$,
${\mathcal I}:=[x_a,x_b]$
and $\uu:[0,T]\times{\mathcal I}\rightarrow{\mathbb R}$
be the solution of the following initial and boundary value problem:
\begin{gather}
\uu_t=\uu_{xx}+g(\uu)\,\uu+f
\quad\text{\rm on}\ \ [0,T]\times{\mathcal I},
\label{PSL_a}\\
\uu(t,x_a)=u(t,x_b)=0\quad\forall\,t\in[0,T],
\label{PSL_b}\\
\uu(0,x)=\uu_0(x)\quad\forall\,x\in{\mathcal I},
\label{PSL_c}
\end{gather}
where $g:{\mathbb R}\rightarrow{\mathbb R}$,
$f:[0,T]\times{\mathcal I}\rightarrow{\mathbb R}$
and $\uu_0:{\mathcal I}\rightarrow{\mathbb R}$ with
\begin{equation}\label{PSL_d}
u_0(x_a)=u_0(x_b)=0.
\end{equation}
%
%
Furthermore, we assume that the data $f$, $u_0$ and $g$ are smooth enough
and compatible, in order to guarantee the existence and uniqueness of a solution
$u$ to the problem above that is sufficiently smooth for our purposes.
\par
Two decades ago, for the discretization in time of the nonlinear
Schr{\"o}dinger equation, C. Besse \cite{Besse1} introduced a new
linear-implicit time-stepping method (called {\it Relaxation Scheme})
as an attempt to avoid the numerical solution of the nonlinear systems
of algebraic equations that the application of the implicit Crank-Nicolson method yields.
The proposed time discretization technique, combined with a finite element or
a finite difference space discretization, is computationally efficient
(see, e.g., \cite{ABB2013}, \cite{Mitsotakis}, \cite{Henning})
and performs as a second order method (see, e.g., \cite{Besse2}, \cite{Mitsotakis}).
Later, C. Besse \cite{Besse2} analyzing the Relaxation Scheme as
a semidiscrete in time method to approximate the solution of the
Cauchy problem (i.e. without the presence of boundary conditions) shows,
using  that it is local well-posedness and convergent without concluding
a convergent rate with respect to the time-step.
Until today, in spite of the results in \cite{Besse2}, there is
no scientific work in the literature providing an error estimate
for the Relaxation Scheme. Since the Relaxation Scheme can not be
classified as a Runge-Kutta or a linear multistep method, a natural
question arises: ``is the Relaxation Scheme a special method or a
representative member of a new family of linear implicit time-discretization
methods?" One way moving toward to find an answer is first to understand
its convergence and then to construct methods with similar characteristics.
%
%
\par
The aim of the work at hands is to contribute to the understanding of the
convergence nature of the Besse relaxation scheme, by investigating its
use, along with a finite difference space discretization, to obtain
approximations of the solution to the parabolic problem \eqref{PSL_a}-\eqref{PSL_d}.
%
%
By building up a proper stability argument and using energy techniques,
we are able to prove an optimal, second order error estimate in a discrete
$L_t^{\infty}(H_x^1)-$norm. The result is new and opens the discussion
on the applicability and the extension of the Relaxation Scheme to other
non-linear evolution equations.
%
%
%
\subsection{Formulation of the numerical method}
\subsubsection{Notation}
Let $\nset$ be the set of all positive integers and ${\sf L}:=x_b-x_a$. For given
$N\in\nset$, we define a uniform partition of the time interval $[0,T]$ with
time-step $\tau:=\tfrac{T}{N}$, nodes $t_n:=n\,\tau$ for $n=0,\dots,N$,
and intermediate nodes $t^{n+\half}=t_n+\tfrac{\tau}{2}$ for
$n=0,\dots,N-1$. Also, for given $J\in\nset$, we consider a uniform partition
of ${\mathcal I}$ with mesh-width $h:=\tfrac{{\sf L}}{J+1}$ and nodes
$x_j:=x_a+j\,h$ for $j=0,\dots,J+1$. Then, we introduce the discrete spaces
\begin{equation*}
\gspace:=\left\{\,(v_j)_{j=0}^{\ssy J+1}:\,\, v_j\in\rset,\,\, j=0,\dots,J+1\,\right\}
\ken\text{\rm and}\ken
{\dspace}:=\left\{\,(v_j)_{j=0}^{\ssy J+1}\in\gspace:\,\, v_0=v_{\ssy J+1}=0\,\right\},
\end{equation*}
a discrete product operator
$\cdot\otimes\cdot:\gspace\times\gspace\rightarrow\gspace$ by
\begin{equation*}
(v{\otimes}w)_j=v_j\,w_j,\quad j=0,\dots,J+1,
\quad\forall\,v,w\in\gspace,
\end{equation*}
and a discrete Laplacian operator $\Delta_h:\dspace\rightarrow\dspace$ by
\begin{equation*}
\Delta_hv_j:=\tfrac{v_{j-1}-2v_j+v_{j+1}}{h^2},
\quad j=1,\dots,J,\quad\forall\,v\in\dspace.
\end{equation*}
In addition, we introduce o\-pe\-rators
${\sf I}_h:{\sf C}({\mathcal I};{\mathbb R})\rightarrow\gspace$
and
$\PP:{\sf C}({\mathcal I};{\mathbb R})\rightarrow\dspace$, which, for
given $z\in{\sf C}({\mathcal I};\rset)$, are defined by
$({\sf I}_h z)_j:=z(x_j)$ for $j=0,\dots,J+1$ and $z\in{\sf C}({\mathcal  I};\rset)$
and
$(\PP z)_j:=z(x_j)$ for $j=1,\dots,J$.
Finally, for $\ell\in\nset$ and for any function $q:\rset^{\,\ell}\rightarrow\rset$ and any
$w=(w^1,\dots,w^{\ell})\in(\gspace)^{\ell}$, we define
$q(w)\in\gspace$ by
$(q(w))_j:=q\left(w_j^1,\dots,w^{\ell}_j\right)$
for $j=0,\dots,J+1$.
%
%
\subsubsection{The Besse Relaxation Finite Difference method}\label{BRS_method}
\par
The Besse Relaxation Finite Difference (BRFD) method combines a standard finite
difference discetization in space with the Besse relaxation scheme in time
(cf. \cite{Besse1}). Its algorithm consists of the following steps:
\par\noindent\vskip0.2truecm\par\noindent
{\tt Step I}: Define $\UUU^0\in\dspace$ by
\begin{equation}\label{BRS_1}
\UUU^0:=\uu^0
\end{equation}
and then find $\UUU^{\half}\in\dspace$ such that
\begin{equation}\label{BRS_12}
\begin{split}
\tfrac{\UUU^{\half}-\UUU^0}{(\tau/2)}
=\Delta_{h}\left(\,\tfrac{\UUU^{\half}+\UUU^0}{2}\,\right)
+g\big(\uu^0\big)\otimes\left(\tfrac{\UUU^{\half}+\UUU^0}{2}\right)
+\PP\left[\tfrac{f(t^{\half},\cdot)+f(t_0,\cdot)}{2}\right].
\end{split}
\end{equation}
\par\noindent\vskip0.2truecm\par\noindent
{\tt Step II}: Define $\Phi^{\half}\in\gspace$ by
\begin{equation}\label{BRS_13}
\Phi^{\half}:=g(\UUU^{\half})
\end{equation}
and then find $\UUU^1\in\dspace$ such that
\begin{equation}\label{BRS_2}
\begin{split}
\tfrac{\UUU^1-\UUU^0}{\tau}=\Delta_{h}\left(\,\tfrac{\UUU^1+\UUU^0}{2}\,\right)
+\Phi^{\half}\otimes\left(\tfrac{\UUU^1+\UUU^0}{2}\right)
+\PP\left[\tfrac{f(t_{1},\cdot)+f(t_0,\cdot)}{2}\right].
\end{split}
\end{equation}
\par\noindent\vskip0.2truecm\par\noindent
{\tt Step III}: For $n=1,\dots,N-1$, first define $\Phi^{n+\half}\in\gspace$ by
\begin{equation}\label{BRS_3}
\Phi^{n+\half}:=2\,g(\UUU^n)-\Phi^{n-\half}
\end{equation}
and then find $\UUU^{n+1}\in\dspace$ such that
\begin{equation}\label{BRS_4}
\begin{split}
\tfrac{\UUU^{n+1}-\UUU^{n}}{\tau}=\Delta_h\left(\tfrac{\UUU^{n+1}+\UUU^{n}}{2}\right)
+\Phi^{n+\half}\otimes\left(\tfrac{\UUU^{n+1}+\UUU^{n}}{2}\right)
+\PP\left[\tfrac{f(t_{n+1},\cdot)+f(t_n,\cdot)}{2}\right].\\
\end{split}
\end{equation}
\par
Obviously, the numerical method above requires, at each time step, the solution of
a tridiagonal linear system of algebraic equations.
%
\subsection{An overview of the paper}
\par
In the error analysis of the (BRFD) method, we face the locally Lipschitz nonlinearity of the
problem by introducing the (MBRFD) scheme (see Section~\ref{The_MBRFD}),
which follows from the (BRFD) method after molifying properly the terms with nonlinear
structure (cf. \cite{Akrivis1}, \cite{Georgios1}, \cite{KarMak}).
The (MBRFD) approximations depend on a parameter
$\delta>0$ and have the following key property: when their discrete $L^{\infty}$-norm is bounded by $\delta$,
then they are also (BRFD) approximations, because, in that case, the molifier (see \eqref{ni_defin})
acts as an indentity.
Assuming that $\delta$ is large enough and $\tau$ is sufficiently small, for the non
computable (ΜBRFD) approximations, first we show that are well-defined
(see Proposition~\ref{BR_Exist}), and then we establish an optimal, second order
error estimate in the discrete $H^1$-norm (see Theorem~\ref{BR_CB_Conv}).
Letting $h$ and $\tau$ be sufficiently small (see \eqref{BR_Xmesh})
and applying a discrete Sobolev inequality (see \eqref{LH1}), the latter
convergence result implies that the discrete $L^{\infty}-$norm of the (MBRFD) approximations
are lower than $\delta$ and thus they, also, are (BRFD) approximations.
Finally, we are show that the (BRFD) approximations are unique
and hence inherit the convergence properties of the (MBRFD) scheme (see Theorem~\ref{DR_Final}),
i.e. that there exist constants $C_1$ and $C_2$, independent of $\tau$ and $h$, such that
\begin{equation*}
\big|\,\UUU^{\half}-\PP[\uu(t^{\half},\cdot)]\,\big|_{1,h}
\leq\,C_1\,(\tau^2+\tau^{\half}\,h^2)
\end{equation*}
and
\begin{equation*}
\max_{0\leq n\leq {\ssy N}}\left[\,
\big|\,\Phi^{n+\half}-{\sf I}_h[g(u(t^{n+\half},\cdot))]\,\big|_{1,h}
+\big|\,U^n-\PP[u(t_n,\cdot)]\,\big|_{1,h}\,\right]\leq\,C_2\,(\tau^2+h^2),
\end{equation*}
where $|\cdot|_{1,h}$ is a discrete $H^1-$norm which is stronger than the discrete $L^{\infty}-$norm.
%
%
\par
At every time-step, the (BRFD) method computes first an approximation
of $g(u)$ at the midpoint of the current time interval (see \eqref{BRS_13} and \eqref{BRS_3})
and then an approximation of $u$ at the next time node (see \eqref{BRS_2} and \eqref{BRS_4}).
%
%
However, the computation of the approximations of $g(u)$ at the midpoints is a simple
postprocessing procedure and has no obvious discrete dynamic structure. The stability argument
we employ is based first on taking a discrete derivative of the error equation that corresponds to
\eqref{BRS_3} (see \eqref{BR_CB_Gat7}) and then on including the discrete $L^2$
and discrete $H^1$ norm of the time increment of the error in the stability norm (see
\eqref{BR_Zerror} and \eqref{BR_NES_10_a}).
%
\par
We close this section by giving a brief overview of the paper.
In Section~\ref{Section2}, we introduce additional notation
and provide a series of auxiliary results.
Section~\ref{Section3} is dedicated to the estimation of
several type of consistency errors and of the approximation
error of a discrete elliptic projection.
In Section~\ref{Section4}, we define a modified version of the (BRFD) method,
and then analyze its convergence properties and arrive at a set of conditions
that ensure the well-posedness and convergence of the (BRFD) method.
%
%
%
\section{Preliminaries}\label{Section2}
%
%
%
Let us introduce another discrete space by
$\sspace:=\left\{\,(z_j)_{j=0}^{\ssy J}:\,\,z_j\in\rset,\quad j=0,\dots,J\right\}$
and the discrete space derivative operator
$\delta_h:\gspace\rightarrow\sspace$ by
\begin{equation*}
\delta_hv_j:=\tfrac{v_{j+1}-v_j}{h},\quad j=0,\dots,J,
\quad\forall\,v\in\gspace.
\end{equation*}
We define on $\sspace$ an inner product $(\!\!(\cdot,\cdot)\!\!)_{0,h}$ by
$(\!\!(z,v)\!\!)_{0,h}:=h\,\sum_{j=0}^{\ssy J}z_j\,v_j$ for $z,v\in\sspace$,
and we will denote by $|\!|\!|\cdot|\!|\!|_{0,h}$ the corresponding norm,
i.e. $|\!|\!|z|\!|\!|_{0,h}:=\left[(\!\!(z,z)\!\!)_{0,h}\right]^{\ssy 1/2}$ for $z\in\sspace$.
Also, we define a discrete maximum norm $|\!|\!|\cdot|\!|\!|_{\infty,h}$ on $\sspace$
by $|\!|\!|v|\!|\!|_{\infty,h}:=\max_{0\leq{j}\leq{\ssy J}}|v_j|$ for $v\in\sspace$.
\par
We provide $\dspace$ with the discrete inner product
$(\cdot,\cdot)_{0,h}$ given by $(v,z)_{0,h}:=h\,\sum_{j=1}^{\ssy J}v_j\,z_j$
for $v,z\in\dspace$, and we shall denote by $\|\cdot\|_{0,h}$ its
induced norm, i.e.
$\|v\|_{0,h}:=\left[(v,v)_{0,h}\right]^{\ssy 1/2}$ for $v\in\dspace$.
Also, we equip $\gspace$ with
a discrete $L^{\infty}$-norm $|\cdot|_{\infty,h}$
defined by $|w|_{\infty,h}:=\max_{0\leq j\leq{J+1}}|w_j|$ for
$w\in\gspace$, and with a discrete $H^1$-seminorm $|\cdot|_{1,h}$
given by $|w|_{1,h}:=|\!|\!|\delta_hw|\!|\!|_{0,h}$ for $w\in\gspace$.
It is easily seen that $|\cdot|_{1,h}$ becomes a norm when it is restricted on $\dspace$
and satisfies the following useful inequalities:
\begin{gather}
|v|_{\infty,h}\leq\,{\sf L}^{\ssy 1/2}\,|v|_{1,h},\label{LH1}\\
\|v\|_{0,h}\leq\,{\sf L}\,|v|_{1,h}\label{dpoincare}
\end{gather}
for $v\in\dspace$.
%
In the sequel, we present a series of auxiliary results that they will be in often use in the rest of the work.
\begin{lem}
For all $v,z\in\dspace$ it holds that
\begin{gather}
(\Delta_hv,z)_{0,h}=-(\!\!(\delta_hv,\delta_hz)\!\!)_{0,h}=(v,\Delta_hz)_{0,h},
\label{NewEra1}\\
(\Delta_hv,v)_h=-|v|^2_{1,h}.\label{NewEra2}
\end{gather}
\end{lem}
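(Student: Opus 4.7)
The plan is to prove \eqref{NewEra1} by a standard discrete integration-by-parts (Abel summation), and then obtain \eqref{NewEra2} as the special case $z=v$.

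First I would expand $(\Delta_h v,z)_{0,h}$ from the definitions:
\begin{equation*}
(\Delta_h v,z)_{0,h}=h\sum_{j=1}^{J}\tfrac{v_{j-1}-2v_j+v_{j+1}}{h^2}\,z_j
=\tfrac{1}{h}\sum_{j=1}^{J}\bigl[(v_{j+1}-v_j)-(v_j-v_{j-1})\bigr]z_j
=\sum_{j=1}^{J}\bigl(\delta_h v_j-\delta_h v_{j-1}\bigr)z_j.
\end{equation*}
The key algebraic manipulation is the identity $v_{j-1}-2v_j+v_{j+1}=h(\delta_h v_j-\delta_h v_{j-1})$, which recasts the discrete Laplacian as a backward difference of the discrete gradient and sets up the Abel summation.

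Next I would perform the shift-of-index trick: split the last sum as $\sum_{j=1}^{J}\delta_h v_j\,z_j-\sum_{k=0}^{J-1}\delta_h v_k\,z_{k+1}$ and collect common indices, yielding
\begin{equation*}
\sum_{j=1}^{J}\bigl(\delta_h v_j-\delta_h v_{j-1}\bigr)z_j
=\delta_h v_J\,z_J-\delta_h v_0\,z_1-h\sum_{j=1}^{J-1}\delta_h v_j\,\delta_h z_j.
\end{equation*}

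Now the only subtle point: to absorb the boundary contributions $\delta_h v_J\,z_J$ and $-\delta_h v_0\,z_1$ back into the discrete gradient inner product, I would invoke the Dirichlet condition $z_0=z_{J+1}=0$ satisfied by $z\in\dspace$. This gives $h\,\delta_h z_0=z_1$ and $h\,\delta_h z_J=-z_J$, so that
\begin{equation*}
\delta_h v_J\,z_J-\delta_h v_0\,z_1
=-h\bigl[\delta_h v_0\,\delta_h z_0+\delta_h v_J\,\delta_h z_J\bigr].
\end{equation*}
Combining with the interior sum reconstructs the full sum from $j=0$ to $j=J$:
\begin{equation*}
(\Delta_h v,z)_{0,h}=-h\sum_{j=0}^{J}\delta_h v_j\,\delta_h z_j=-(\!\!(\delta_h v,\delta_h z)\!\!)_{0,h}.
\end{equation*}
The second equality in \eqref{NewEra1} then follows immediately by symmetry: the same computation carried out with the roles of $v$ and $z$ swapped (both are in $\dspace$) gives $(v,\Delta_h z)_{0,h}=-(\!\!(\delta_h z,\delta_h v)\!\!)_{0,h}$, and the inner product $(\!\!(\cdot,\cdot)\!\!)_{0,h}$ is symmetric. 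Finally, \eqref{NewEra2} is obtained by specializing \eqref{NewEra1} to $z=v$ and recognizing $|\!|\!|\delta_h v|\!|\!|_{0,h}^2=|v|_{1,h}^2$ from the definition of the discrete $H^1$-seminorm.

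There is no real obstacle here; the only point to handle carefully is the accounting of the two boundary terms after Abel summation, where the Dirichlet conditions on $z$ (and, by the symmetric argument, on $v$) are precisely what is needed to make the boundary contributions combine with the interior sum into the full gradient inner product over $j=0,\dots,J$.
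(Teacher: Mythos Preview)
Your proposal is correct and follows essentially the same approach as the paper: discrete summation by parts (Abel summation) using the Dirichlet conditions $z_0=z_{J+1}=0$, followed by the specialization $z=v$ to obtain \eqref{NewEra2}. The paper's write-up is slightly slicker---it extends both summation ranges to $j=0,\dots,J$ in one stroke (using $z_0=0$ and $z_{J+1}=0$) rather than isolating the two boundary terms---but the argument is the same.
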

%
%
%
%
\begin{proof}
Let $v,z\in\dspace$. First, we establish \eqref{NewEra1}
proceeding as follows:
\begin{equation*}
\begin{split}
(\Delta_hv,z)_{0,h}=&\,\sum_{j=1}^{\ssy J}\left[\,(\delta_hv)_{j}-(\delta_hv)_{j-1}\right]\,z_j
=\sum_{j=0}^{\ssy J}(\delta_hv)_{j}\,z_j-\sum_{j=0}^{\ssy J}(\delta_hv)_j\,z_{j+1}
=-(\!\!(\delta_hv,\delta_hz)\!\!)_{0,h}.\\
\end{split}
\end{equation*}
Then, we set $z=v$ in \eqref{NewEra1} to get \eqref{NewEra2}.
\end{proof}
%
%
%
%
%
%
\begin{lem}\label{Lemma_D2}
Let ${\mathfrak g}\in C_b^2(\rset;\rset)$. Then, for $v,w\in\dspace$, it holds that
\begin{equation}\label{Gprop0}
|\mfg(v)-\mfg(w)|_{1,h}\leq
\mfg'_{\infty}\,|v-w|_{1,h}
+\mfg''_{\infty}
\,|\!|\!|\delta_hw|\!|\!|_{\infty,h}\,\|v-w\|_{0,h}
\end{equation}
where  $\mfg'_{\infty}:=\sup_{\ssy\rset}|\mfg'|$ and  $\mfg''_{\infty}:=\sup_{\ssy\rset}|\mfg''|$.
\end{lem}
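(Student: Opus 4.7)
The plan is to compute $\delta_h[\mathfrak{g}(v)-\mathfrak{g}(w)]_j$ pointwise using an integral (fundamental-theorem-of-calculus) representation tailored so that the argument difference appearing in $\mathfrak{g}'$ reduces to a convex combination of $e_j:=(v-w)_j$ and $e_{j+1}$, with no extra $\delta_h e$ contribution. Concretely, for each $j\in\{0,\dots,J\}$ I would write
\begin{equation*}
\delta_h\mathfrak{g}(v)_j
=\Bigl(\int_0^1\mathfrak{g}'\bigl((1-t)v_j+tv_{j+1}\bigr)\,dt\Bigr)\,\delta_hv_j
=:P_j(v)\,\delta_hv_j,
\end{equation*}
and analogously for $w$. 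This is the key choice: the parameterization $(1-t)v_j+tv_{j+1}$ (equivalently $v_j+th\,\delta_hv_j$) is what makes things work.

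Next I would split additively:
\begin{equation*}
\delta_h[\mathfrak{g}(v)-\mathfrak{g}(w)]_j
=P_j(v)\,\delta_he_j+\bigl[P_j(v)-P_j(w)\bigr]\,\delta_hw_j.
\end{equation*}
The first factor satisfies $|P_j(v)|\le\mathfrak{g}'_\infty$. For the second, since the argument difference $((1-t)v_j+tv_{j+1})-((1-t)w_j+tw_{j+1})=(1-t)e_j+te_{j+1}$ contains no $\delta_h e$ term, the mean-value estimate on $\mathfrak{g}'$ yields
\begin{equation*}
|P_j(v)-P_j(w)|\le\mathfrak{g}''_\infty\int_0^1\!\bigl|(1-t)e_j+te_{j+1}\bigr|\,dt
\le\tfrac{\mathfrak{g}''_\infty}{2}\bigl(|e_j|+|e_{j+1}|\bigr).
\end{equation*}

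Taking $|\!|\!|\cdot|\!|\!|_{0,h}$ of the pointwise identity and using the triangle inequality gives
\begin{equation*}
|\mathfrak{g}(v)-\mathfrak{g}(w)|_{1,h}
\le\mathfrak{g}'_\infty\,|e|_{1,h}
+\tfrac{\mathfrak{g}''_\infty}{2}\,|\!|\!|\delta_hw|\!|\!|_{\infty,h}\,
\bigl|\!\bigl|\!\bigl||e_\cdot|+|e_{\cdot+1}|\bigr|\!\bigr|\!\bigr|_{0,h}.
\end{equation*}
Finally, using $(|e_j|+|e_{j+1}|)^2\le2(|e_j|^2+|e_{j+1}|^2)$ and the boundary conditions $e_0=e_{J+1}=0$ (so the summation indices can be shuffled without leaving $\dspace$), I would bound $\bigl|\!\bigl|\!\bigl||e_\cdot|+|e_{\cdot+1}|\bigr|\!\bigr|\!\bigr|_{0,h}\le 2\,\|e\|_{0,h}$, which absorbs the $1/2$ factor and yields the stated inequality.

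The only subtle point is choosing the integral representation in step one: if one instead uses e.g.\ $w_j+t(v_j-w_j)$ for each node independently, then $P_j(v)-P_j(w)$ unavoidably produces a term proportional to $|\delta_he_j|$, which cannot be absorbed into $\|e\|_{0,h}$. The convex-combination form along the grid edge $[x_j,x_{j+1}]$ is what eliminates this term, and once that observation is made the remainder of the argument is essentially Cauchy–Schwarz and relabeling of indices.
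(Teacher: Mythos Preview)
Your argument is correct and is essentially identical to the paper's own proof: the paper introduces the same convex combinations ${\mathfrak a}^s_j=s\,v_{j+1}+(1-s)\,v_j$ and ${\mathfrak b}^s_j=s\,w_{j+1}+(1-s)\,w_j$, obtains the same splitting $\delta_h(\mathfrak{g}(v)-\mathfrak{g}(w))=\mathcal{L}^A+\mathcal{L}^B$ with $\mathcal{L}^A_j=P_j(v)\,\delta_h e_j$ and $\mathcal{L}^B_j=[P_j(v)-P_j(w)]\,\delta_h w_j$, and bounds each term exactly as you do. The only cosmetic difference is that the paper writes ``we easily arrive at'' for the final index-shift estimate $|\!|\!||e_\cdot|+|e_{\cdot+1}||\!|\!|_{0,h}\le 2\|e\|_{0,h}$, which you spell out explicitly.
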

%
%
%
%
%
\begin{proof}
Let $v,w\in\dspace$. First, we define ${\mathfrak a}^s$, ${\mathfrak b}^s\in\sspace$
by ${\mathfrak a}^s_j:=s\,v_{j+1}+(1-s)\,v_j$
and ${\mathfrak b}^s_j:=s\,w_{j+1}+(1-s)\,w_j$ for
$j=0,\dots,J$ and $s\in[0,1]$.
Then,  we use the mean value theorem, to conclude that
\begin{equation}\label{AGP_1}
\delta_h(\mfg(v)-\mfg(w))={\mathcal L}^{\ssy A}+{\mathcal L}^{\ssy B}
\end{equation}
where ${\mathcal L}^{\ssy A},{\mathcal L}^{\ssy B}\in\sspace$ given by
${\mathcal L}^{\ssy A}_j:=(\delta_h(v-w))_j\,\int_0^1\mfg'({\mathfrak a}^s_j)\;ds$
and ${\mathcal L}^{\ssy B}_j:=\delta_hw_j\,\int_0^1\left[\mfg'({\mathfrak a}^s_j)
-\mfg'({\mathfrak b}^s_j)\right]\;ds$
for $j=0,\dots,J$.
Observing that
\begin{equation*}
\left|{\mathcal L}^{\ssy A}_j\right|\leq
\sup_{\ssy\rset}|\mfg'|\,|(\delta_h(v-w))_j|,
\quad j=0,\dots,J,
\end{equation*}
and
\begin{equation*}
\begin{split}
\left|{\mathcal L}^{\ssy B}_j\right|\leq&\,|(\delta_hw)_j|\,
\sup_{\ssy\rset}|\mfg''|\left|\int_0^1\left[\,s(v_{j+1}-w_{j+1})+(1-s)\,(v_j-w_j)\,\right]\;ds\right|\\
\leq&\,\tfrac{1}{2}\,|(\delta_hw)_j|\,\sup_{\ssy\rset}|\mfg''|\,
\left(|v_{j+1}-w_{j+1}|+|v_j-w_j|\right),
\quad j=0,\dots,J,\\
\end{split}
\end{equation*}
we, easily, arrive at
\begin{gather}
|\!|\!|{\mathcal L}^{\ssy A}|\!|\!|_{0,h}\leq\,\sup_{\ssy\rset}|\mfg'|\,
|\!|\!|\delta_h(v-w)|\!|\!|_{0,h},\label{AGP_2}\\
|\!|\!|{\mathcal L}^{\ssy B}|\!|\!|_{0,h}\leq\,|\!|\!|\delta_hw|\!|\!|_{\infty,h}
\,\sup_{\ssy\rset}|\mfg''|\,\|v-w\|_{0,h}.\label{AGP_3}
\end{gather}
Thus, \eqref{Gprop0} follows as a simple consequence of \eqref{AGP_1}, \eqref{AGP_2} and \eqref{AGP_3}.
\end{proof}
%
%
%
%
\begin{lem}\label{Lemma_D3}
Let $\mfg\in C^3_b(\rset;\rset)$. Then, for $v^a,v^b,z^a,z^b\in\dspace$, it holds that
\begin{equation}\label{OPAP_1A}
\begin{split}
\|\mfg(v^a)-\mfg(v^b)-\mfg(z^a)+\mfg(z^b)\|_{0,h}
\leq&\,\mfg''_{\infty}\,\,|z^a-z^b|_{\infty,h}\,\|v^b-z^b\|_{0,h}\\
&+\left(\mfg'_{\infty}+\mfg''_{\infty}\,\,|z^a-z^b|_{\infty,h}\right)
\,\|v^a-v^b-z^a+z^b\|_{0,h}\\
\end{split}
\end{equation}
and
\begin{equation}\label{OPAP_1B}
\begin{split}
|\mfg(v^a)-\mfg(v^b)-\mfg(z^a)+\mfg(z^b)|_{1,h}
\leq&\,{\mathcal F}^{\ssy A}(v^a,v^b)\,
|v^a-v^b-z^a+z^b|_{1,h}\\
&+{\mathcal F}^{\ssy B}(z^a,z^b)\,
\,\left(\,\|v^a-v^b-z^a+z^b\|_{0,h}+\|v^b-z^b\|_{0,h}\,\right)\\
&+{\mathcal F}^{\ssy C}(z^a,z^b)\,
\,\left(\,|v^a-v^b-z^a+z^b|_{1,h}+|v^b-z^b|_{1,h}\,\right),\\
\end{split}
\end{equation}
where $\mfg'_{\infty}:=\sup_{\ssy\rset}|\mfg'|$, $\mfg''_{\infty}:=\sup_{\ssy\rset}|\mfg''|$,
\begin{equation*}
\begin{split}
{\mathcal F}^{\ssy A}(v^a,v^b):=&\,{\mathfrak g}'_{\infty}
+\tfrac{{\sf L}^{1/2}}{2}\,{\mathfrak g}''_{\infty}\,(\,|v^a|_{1,h}+|v^b|_{1,h}\,),\\
{\mathcal F}^{\ssy B}(z^a,z^b):=&\,\mfg''_{\infty}\,\,|\!|\!|\delta_h(z^a-z^b)|\!|\!|_{\infty,h},\\
{\mathcal F}^{\ssy C}(z^a,z^b):=&\,|z^a-z^b|_{1,h}\,\left[\mfg''_{\infty}+{\sf L}\,{\mathfrak g}'''_{\infty}
\,\left(|\!|\!|\delta_hz^a|\!|\!|_{\infty,h}+|\!|\!|\delta_hz^b|\!|\!|_{\infty,h}\right)\right]\\
\end{split}
\end{equation*}
and $\mfg'''_{\infty}:=\sup_{\ssy\rset}|\mfg'''|$.
\end{lem}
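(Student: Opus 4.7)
The plan is to establish both bounds by expanding the quantity $E_j := \mfg(v^a_j)-\mfg(v^b_j)-\mfg(z^a_j)+\mfg(z^b_j)$ node-wise through integral representations obtained from the mean value theorem, rearranging so that the "difference of differences" $W := v^a-v^b-z^a+z^b$ appears explicitly in the leading term, and isolating the residual contributions in a way that factors of $z^a - z^b$ (or $\delta_h(z^a - z^b)$) can be pulled out in $L^\infty$ to form the "small" coefficients $\mathcal{F}^{\ssy B}$ and $\mathcal{F}^{\ssy C}$.

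For \eqref{OPAP_1A}, I would fix $j$ and write
\begin{equation*}
E_j = (v^a_j - v^b_j)\,C_j - (z^a_j - z^b_j)\,D_j,
\end{equation*}
with $C_j := \int_0^1 \mfg'(sv^a_j + (1-s)v^b_j)\,ds$ and $D_j := \int_0^1 \mfg'(sz^a_j + (1-s)z^b_j)\,ds$, and then split this as $E_j = C_j\,W_j + (C_j - D_j)\,(z^a_j - z^b_j)$. A further application of the mean value theorem to $\mfg'$ inside the integrals defining $C_j, D_j$ yields $|C_j - D_j| \leq \tfrac{1}{2}\,\mfg''_{\infty}\,(|v^a_j - z^a_j| + |v^b_j - z^b_j|)$. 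Substituting $v^a - z^a = W + (v^b - z^b)$ to eliminate the asymmetric factor, bounding $|z^a_j - z^b_j|\le |z^a-z^b|_{\infty,h}$, and taking the discrete $\|\cdot\|_{0,h}$ norm gives \eqref{OPAP_1A} directly.

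For \eqref{OPAP_1B} the strategy is analogous but applied to $\delta_h E$. Using the identity $(\delta_h \mfg(w))_j = (\delta_h w)_j \int_0^1 \mfg'(sw_{j+1} + (1-s)w_j)\,ds$ at each of the four arguments and denoting the resulting integrals $A^a_j, A^b_j, B^a_j, B^b_j$ (for $v^a, v^b, z^a, z^b$ respectively), I would perform a two-stage rearrangement: first extract $(\delta_h W)_j\,A^a_j$ as the leading term; second, group the remainder as
\begin{equation*}
(\delta_h(z^a-z^b))_j\,(A^a_j - B^a_j) \,+\, (\delta_h(v^b-z^b))_j\,(A^a_j - A^b_j) \,+\, (\delta_h z^b)_j\,\bigl[(A^a_j - A^b_j) - (B^a_j - B^b_j)\bigr].
\end{equation*}
Bounding $|A^a_j| \leq \mfg'_{\infty}$ and $|A^a_j - A^b_j| \leq \mfg''_{\infty}\,|v^a - v^b|_{\infty,h}$, and then invoking the embedding \eqref{LH1} to convert the latter into ${\sf L}^{1/2}\,(|v^a|_{1,h} + |v^b|_{1,h})$, delivers the $\mathcal{F}^{\ssy A}|W|_{1,h}$ main term together with the $|v^b-z^b|_{1,h}$ part of the final bound. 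The first residual above yields the $\mathcal{F}^{\ssy B}$ contribution once we bound $|A^a_j - B^a_j|$ by $\tfrac{1}{2}\mfg''_{\infty}(|v^a_{j+1}-z^a_{j+1}|+|v^a_j-z^a_j|)$ and again substitute $v^a - z^a = W + (v^b - z^b)$. The bracketed "double difference" in the last residual is bounded by applying \eqref{OPAP_1A} itself with $\mfg$ replaced by $\mfg'$ and the four points ${\mathfrak a}^{s,a}_j, {\mathfrak a}^{s,b}_j, {\mathfrak b}^{s,a}_j, {\mathfrak b}^{s,b}_j$, producing the $\mfg''_{\infty}$ and $\mfg'''_{\infty}\,(|\!|\!|\delta_h z^a|\!|\!|_{\infty,h} + |\!|\!|\delta_h z^b|\!|\!|_{\infty,h})$ pieces of $\mathcal{F}^{\ssy C}$; the factor ${\sf L}$ enters here through \eqref{dpoincare} when converting an $L^2$ bound on one factor into an $H^1$-type bound compatible with $|z^a - z^b|_{1,h}$.

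The main obstacle is the bookkeeping in the $H^1$ estimate: the four terms $(\delta_h v^a)A^a - (\delta_h v^b)A^b - (\delta_h z^a)B^a + (\delta_h z^b)B^b$ admit many algebraic rearrangements, and only a carefully chosen one simultaneously (i) isolates $\delta_h W$ in the leading contribution, (ii) produces one residual whose coefficient is $\delta_h(z^a-z^b)$ accessible in $L^\infty$ for $\mathcal{F}^{\ssy B}$, and (iii) produces another whose coefficient contains $z^a - z^b$ only through second-order differences amenable to the $\mfg'''$ MVT for $\mathcal{F}^{\ssy C}$. Related is the timing of \eqref{LH1} and \eqref{dpoincare}: invoked too early they inflate constants, invoked too late they leave incompatible norms. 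Once this is arranged, \eqref{OPAP_1B} follows by taking $|\!|\!|\cdot|\!|\!|_{0,h}$ of the decomposition and bounding each piece by its uniform factor times an $\ell^2$-type sum.
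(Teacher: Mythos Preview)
Your treatment of \eqref{OPAP_1A} is correct and coincides with the paper's: writing ${\mathfrak f}_j:=C_j$ and ${\mathfrak t}_j:=C_j-D_j$, the paper uses the identical splitting $E=W\otimes{\mathfrak f}+(z^a-z^b)\otimes{\mathfrak t}$ and bounds the two pieces exactly as you describe.

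For \eqref{OPAP_1B} there is a genuine gap. Your four-term rearrangement of $\delta_hE$ contains the piece $(\delta_h(v^b-z^b))_j\,(A^a_j-A^b_j)$, and with your own bound $|A^a_j-A^b_j|\le\mfg''_\infty\,|v^a-v^b|_{\infty,h}\le{\sf L}^{1/2}\mfg''_\infty(|v^a|_{1,h}+|v^b|_{1,h})$ this contributes a term of size $\mfg''_\infty(|v^a|_{1,h}+|v^b|_{1,h})\,|v^b-z^b|_{1,h}$. That term does not fit anywhere on the right of \eqref{OPAP_1B}: the only coefficient multiplying $|v^b-z^b|_{1,h}$ there is ${\mathcal F}^{\ssy C}$, which carries the factor $|z^a-z^b|_{1,h}$, not $|v^a|_{1,h}+|v^b|_{1,h}$. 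Likewise your fourth piece enters with the prefactor $|\!|\!|\delta_hz^b|\!|\!|_{\infty,h}$ rather than $|z^a-z^b|_{1,h}$, so it does not reproduce ${\mathcal F}^{\ssy C}$ either; and your leading piece $(\delta_hW)A^a$ accounts only for the $\mfg'_\infty$ half of ${\mathcal F}^{\ssy A}$, with nothing left in your list to supply the $\tfrac{{\sf L}^{1/2}}{2}\mfg''_\infty(|v^a|_{1,h}+|v^b|_{1,h})\,|W|_{1,h}$ half.

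The paper avoids this by \emph{not} expanding $\delta_hE$ from scratch. It keeps the two-term product decomposition $E={\mathfrak L}^A+{\mathfrak L}^B:=W\otimes{\mathfrak f}+(z^a-z^b)\otimes{\mathfrak t}$ already used for \eqref{OPAP_1A} and applies the discrete product rule to each factor. Both pieces of $\delta_h{\mathfrak L}^A$ then carry a factor of $W$ (one as $\delta_hW$, the other as $W_j$ paired with $(\delta_h{\mathfrak f})_j$ and converted via \eqref{LH1}), which is precisely what produces the full ${\mathcal F}^{\ssy A}|W|_{1,h}$. Both pieces of $\delta_h{\mathfrak L}^B$ carry a factor of $z^a-z^b$ (one as $\delta_h(z^a-z^b)$ in $|\!|\!|\cdot|\!|\!|_{\infty,h}$, the other as $(z^a-z^b)_j$ via \eqref{LH1}), giving ${\mathcal F}^{\ssy B}$ and ${\mathcal F}^{\ssy C}$; the required bound on $|{\mathfrak t}|_{1,h}$ comes from applying Lemma~\ref{Lemma_D2} to $\mfg'$, which is where $\mfg'''_\infty$ and the factor ${\sf L}$ (through \eqref{dpoincare}) enter ${\mathcal F}^{\ssy C}$.
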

%
%
%
%
\begin{proof}
Let $v^a,v^b,z^a,z^b\in\dspace$. We simplify the notation, first, by defining
${\mathfrak a}^s$, ${\mathfrak b}^s\in\dspace$ by
${\mathfrak a}^s:=s\,v^a+(1-s)\,v^b$ and ${\mathfrak b}^s:=s\,z^a+(1-s)\,z^b$
for $s\in[0,1]$, and then, by introducing ${\mathfrak f}\in\gspace $ by
${\mathfrak f}:=\int_0^1\mfg'({\mathfrak a}^s)\;ds$
and
${\mathfrak t}\in\dspace$ by
${\mathfrak t}:=\int_0^1\left[{\mathfrak g}'({\mathfrak a}^s)
-{\mathfrak g}'({\mathfrak b}^s)\right]\;ds$. Also, we set $e^a:=v^a-z^a$ and
$e^b:=v^b-z^b$.
%
%
\par\noindent\vskip0.3truecm\par\noindent
\boxed{\sf Part\,\,\, I.} First, we use the definition of ${\mathfrak f}$ and
the mean value theorem, to get
\begin{equation}\label{OPAP_2}
|{\mathfrak f}|_{\infty,h}\leq\,\mfg'_{\infty}
\end{equation}
and
\begin{equation*}
\begin{split}
\left|\delta_h{\mathfrak f}_j\right|\leq&\,\tfrac{1}{h}\,\int_0^1|\mfg'({\mathfrak a}^s_{j+1})
-\mfg'({\mathfrak a}^s_j)|\;ds\\
\leq&\,\mfg''_{\infty}\,
\int_0^1\left|s\,\delta_hv^a_j+(1-s)\,\delta_hv^b_j\,\right|\;ds\\
\leq&\,\tfrac{1}{2}\,\mfg''_{\infty}
\,\left(\,|\delta_hv^a_j|+|\delta_hv^b_j|\,\right),
\quad j=0,\dots,J,\\
\end{split}
\end{equation*}
which, obviously, yields
\begin{equation}\label{OPAP_3}
|{\mathfrak f}|_{1,h}\leq\,\tfrac{1}{2}\,\mfg''_{\infty}
\,\left(|v^a|_{1,h}+|v^b|_{1,h}\right).
\end{equation}
Next, we use the definition of ${\mathfrak t}$ and the mean value theorem,
to obtain
\begin{equation*}
\begin{split}
|{\mathfrak t}_j|\leq&\,\mfg''_{\infty}\int_0^1|{\mathfrak a}_j^s-{\mathfrak b}_j^s|
\;ds\\
\leq&\,\mfg''_{\infty}\,\,
\int_0^1|s\,(v^a_j-v^b_j-z_j^a+z_j^b)
+(v_j^b-z_j^b)|\;ds\\
\leq&\,\mfg''_{\infty}\,\,\left(\,|v^a_j-v^b_j-z_j^a+z_j^b|
+|v_j^b-z_j^b|\,\right),\quad j=1,\dots,J,\\
\end{split}
\end{equation*}
which, leads to
\begin{equation}\label{OPAP_4}
\|{\mathfrak t}\|_{0,h}\leq\,\mfg''_{\infty}\,
\left(\,\|e^a-e^b\|_{0,h}+\|e^b\|_{0,h}\,\right).
\end{equation}
Finally, for $s\in[0,1]$, we apply \eqref{Gprop0} and \eqref{dpoincare},
to arrive at
\begin{equation}\label{OPAP_5}
\begin{split}
|{\mathfrak g}'({\mathfrak a}^s)-{\mathfrak g}'({\mathfrak b}^s)|_{1,h}
\leq&\,\mfg''_{\infty}\,|{\mathfrak a}^s-{\mathfrak b}^s|_{1,h}
+{\mathfrak g}'''_{\infty}
\,|\!|\!|\delta_h{\mathfrak b}^s|\!|\!|_{\infty,h}\,\|{\mathfrak a}^s-{\mathfrak b}^s\|_{0,h}\\
\leq&\,\big(\,\mfg''_{\infty}+{\sf L}\,{\mathfrak g}'''_{\infty}
\,|\!|\!|\delta_h{\mathfrak b}^s|\!|\!|_{\infty,h}\,\big)\,|{\mathfrak a}^s-{\mathfrak b}^s|_{1,h}\\
\leq&\,\big(\,\mfg''_{\infty}+{\sf L}\,{\mathfrak g}'''_{\infty}
\,|\!|\!|\delta_h{\mathfrak b}^s|\!|\!|_{\infty,h}\,\big)
\,\left(\,|e^a-e^b|_{1,h}+\,|e^b|_{1,h}\,\right).\\
\end{split}
\end{equation}
Observing that $\delta_h{\mathfrak t}=
\int_0^1\delta_h\left[{\mathfrak g}'({\mathfrak a}^s)
-{\mathfrak g}'({\mathfrak b}^s)\right]\;ds$
and using \eqref{OPAP_5} we have
\begin{equation}\label{OPAP_6}
\begin{split}
|{\mathfrak t}|_{1,h}\leq&\int_0^1|{\mathfrak g}'({\mathfrak a}^s)
-{\mathfrak g}'({\mathfrak b}^s)|_{1,h}\;ds\\
\leq&\,\big[\mfg''_{\infty}+{\sf L}\,{\mathfrak g}'''_{\infty}
\,\left(|\!|\!|\delta_hz^a|\!|\!|_{\infty,h}+|\!|\!|\delta_hz^b|\!|\!|_{\infty,h}\right)\big]
\,\left(\,|e^a-e^b|_{1,h}+|e^b|_{1,h}\,\right).\\
\end{split}
\end{equation}
%
%
%
\par\noindent\vskip0.3truecm\par\noindent
\boxed{\sf Part\,\,\, II.} Using the mean value theorem, we obtain
\begin{equation}\label{OPAP_7}
{\mathfrak g}(v^a)-{\mathfrak g}(v^b)-{\mathfrak g}(z^a)+{\mathfrak g}(z^b)
={\mathfrak L}^{\ssy A}+{\mathfrak L}^{\ssy B},
\end{equation}
where ${\mathfrak L}^{\ssy A}$, ${\mathfrak L}^{\ssy B}\in\dspace$ are defined by
${\mathfrak L}^{\ssy A}:=(v^a-v^b-z^a+z^b)\otimes{\mathfrak f}$ and
${\mathfrak L}^{\ssy B}:=(z^a-z^b)\otimes{\mathfrak t}$. Thus, using
\eqref{OPAP_2} and \eqref{OPAP_4}, we have
\begin{equation}\label{OPAP_8}\
\begin{split}
\|{\mathfrak L}^{\ssy A}\|_{0,h}\leq&\,\mfg'_{\infty}\,\|e^a-e^b\|_{0,h},\\
\|{\mathfrak L}^{\ssy B}\|_{0,h}\leq&\,\mfg''_{\infty}
\,|z^a-z^b|_{\infty,h}\,\left(\|e^a-e^b\|_{0,h}+\|e^b\|_{0,h}\right).\\
\end{split}
\end{equation}
The desired inequality \eqref{OPAP_1A} follows, easily, as a simple outcome
of \eqref{OPAP_7} and \eqref{OPAP_8}.
%
%
%
%
\par\noindent\vskip0.3truecm\par\noindent
\boxed{\sf Part\,\,\, III.} For the discrete derivative of ${\mathfrak L}^{\ssy A}$
and ${\mathfrak L}^{\ssy B}$, we, easily, obtain the following formulas:
\begin{equation*}
\begin{split}
(\delta_h{\mathfrak L}^{\ssy A})_{j}=&\,\delta_h(v^a-v^b-z^a+z^b)_j\,{\mathfrak f}_{j+1}
+(v^a_{j}-v^b_{j}-z_{j}^a+z_{j}^b)\,(\delta_h{\mathfrak f})_j,\\
(\delta_h{\mathfrak L}^{\ssy B})_{j}=&\,\delta_h(z^a-z^b)_j\,{\mathfrak t}_{j+1}
+(z^a-z^b)_j\,(\delta_h{\mathfrak t})_j\\
\end{split}
\end{equation*}
for $j=0,\dots,J$, which yield
\begin{equation}\label{OPAP_9}
\begin{split}
|{\mathfrak L}^{\ssy A}|_{1,h}\leq&\,|e^a-e^b|_{1,h}\,|{\mathfrak f}|_{\infty,h}
+|e^a-e^b|_{\infty,h}\,|{\mathfrak f}|_{1,h},\\
|{\mathfrak L}^{\ssy B}|_{1,h}\leq&\,|\!|\!|\delta_h(z^a-z^b)|\!|\!|_{\infty,h}\,\|{\mathfrak t}\|_{0,h}
+|z^a-z^b|_{\infty,h}\,|{\mathfrak t}|_{1,h}.\\
\end{split}
\end{equation}
Using \eqref{OPAP_9}, \eqref{LH1}, \eqref{OPAP_2} and \eqref{OPAP_3}, we have
\begin{equation}\label{OPAP_10}
|{\mathfrak L}^{\ssy A}|_{1,h}\leq\,\left[{\mathfrak g}'_{\infty}
+\tfrac{{\sf L}^{1/2}}{2}\,{\mathfrak g}''_{\infty}\,(\,|v^a|_{1,h}+|v^b|_{1,h}\,)\right]
\,|e^a-e^b|_{1,h}.
\end{equation}
Combining \eqref{OPAP_9}, \eqref{OPAP_4}, \eqref{OPAP_6} and \eqref{LH1}, we arrive
at
\begin{equation}\label{OPAP_11}
\begin{split}
|{\mathfrak L}^{\ssy B}|_{1,h}\leq&\,\mfg''_{\infty}\,|\!|\!|\delta_h(z^a-z^b)|\!|\!|_{\infty,h}\,
\left(\,\|e^a-e^b\|_{0,h}+\|e^b\|_{0,h}\,\right)\\
&\,+|z^a-z^b|_{1,h}\,\big[\mfg''_{\infty}+{\sf L}\,{\mathfrak g}'''_{\infty}
\,\left(|\!|\!|\delta_hz^a|\!|\!|_{\infty,h}+|\!|\!|\delta_hz^b|\!|\!|_{\infty,h}\right)\big]
\,\left(\,|e^a-e^b|_{1,h}+|e^b|_{1,h}\,\right).\\
\end{split}
\end{equation}
Finally, \eqref{OPAP_1B} follows, easily, in view of \eqref{OPAP_7}, \eqref{OPAP_10} and
\eqref{OPAP_11}.
\end{proof}
%
%
%
\section{Consistency Errors}\label{Section3}
%
%
To simplify the notation, we set $t^{\quarter}:=\tfrac{\tau}{4}$,
$\uu^{\frac{1}{4}}:={\sf I}_h[\uu(t^{\quarter},\cdot)]$,
$\uu^{n}:={\sf I}_h[\uu(t_{n},\cdot)]$
for $n=0,\dots,N$, and $\uu^{n+\half}:={\sf I}_h[\uu(t^{n+\half},\cdot)]$ for $n=0,\dots,N-1$.
In view of the Dirichlet boundary conditions \eqref{PSL_b} and the compatibility conditions \eqref{PSL_d},
it holds that $\uu^{\frac{1}{4}}\in\dspace$, $\uu^n\in\dspace$ for $n=0,\dots,N$
and $\uu^{n+\half}\in\dspace$ for $n=0,\dots,N-1$.
\subsection{Time consistency error at the nodes}
Let ${\sf r}^{\frac{1}{4}}\in\gspace$ be defined by
\begin{equation}\label{NORAD_00}
\tfrac{\uu^{\half}-\uu^0}{(\tau/2)}={\sf I}_h\left[
\tfrac{\uu_{xx}(t^{\half},\cdot)+\uu_{xx}(t_0,\cdot)}{2}\right]
+g(\uu^0)\otimes\left(\tfrac{\uu^{\half}+\uu^0}{2}\right)
+{\sf I}_h\left[\tfrac{f(t^{\half},\cdot)+f(t_0,\cdot)}{2}\right]+{\sf r}^{\quarter}
\end{equation}
%
%
and let ${\sf r}^{n+\half}\in\gspace$ be specified by
\begin{equation}\label{NORAD_02}
\tfrac{\uu^{n+1}-\uu^n}{\tau}={\sf I}_h\left[\tfrac{\uu_{xx}(t_{n+1},\cdot)+\uu_{xx}(t_n,\cdot)}{2}\right]
+g(\uu^{n+\half})\otimes\left(\tfrac{\uu^{n+1}+\uu^n}{2}\right)
+{\sf I}_h\left[\tfrac{f(t_{n+1},\cdot)+f(t_n,\cdot)}{2}\right]+{\sf r}^{n+\half}
\end{equation}
for $n=0,\dots,N-1$.
Assuming that the solution $\uu$ is smooth enough on $[0,T]\times{\mathcal I}$, and using
\eqref{PSL_d} and the Dirichlet boundary conditions \eqref{PSL_b}, we conclude
that $u_{xx}(t,x)=-f(t,x)$ for $t\in[0,T]$ and $x\in\{x_a,x_b\}$.
Thus, we have ${\sf r}^{\frac{1}{4}}\in\dspace$ and ${\sf r}^{n+\half}\in\dspace$ for $n=0,\dots,N-1$.
\par
Substracting \eqref{PSL_a} with $(t,x)=(t^{\frac{1}{4}},x_j)$ from \eqref{NORAD_00},
and \eqref{PSL_a} with $(t,x)=(t^{n+\half},x_j)$ from \eqref{NORAD_02}, we get
\begin{equation}\label{USNORTH_01}
\begin{split}
{\sf r}^{\quarter}=&\,{\sf r}^{\quarter}_{\ssy A}-{\sf r}^{\quarter}_{\ssy B}
-{\sf r}^{\quarter}_{\ssy C}-{\sf r}^{\quarter}_{\ssy D},\quad
{\sf r}^{n+\half}={\sf r}^{n+\half}_{\ssy A}-{\sf r}^{n+\half}_{\ssy B}
-{\sf r}^{n+\half}_{\ssy C}-{\sf r}^{n+\half}_{\ssy D},\quad n=0,\dots,N-1,
\end{split}
\end{equation}
where
${\sf r}^{\quarter}_{\ssy A},{\sf r}^{\quarter}_{\ssy C},
{\sf r}^{n+\half}_{\ssy A},{\sf r}^{n+\half}_{\ssy C}\in\dspace$
and ${\sf r}^{\quarter}_{\ssy B},{\sf r}^{\quarter}_{\ssy D},
{\sf r}^{n+\half}_{\ssy B},{\sf r}^{n+\half}_{\ssy D}\in\gspace$ be defined by
\begin{gather*}
{\sf r}^{n+\half}_{\ssy A}:=\tfrac{\uu^{n+1}-\uu^n}{\tau}
-{\sf I}_h\big[\uu_t\big(t^{n+\half},\cdot\big)\big],\quad
{\sf r}^{n+\half}_{\ssy B}:={\sf I}_h\left[\tfrac{\uu_{xx}(t_{n+1},\cdot)+\uu_{xx}(t_n,\cdot)}{2}
-u_{xx}(t^{n+\half},\cdot)\right],\\
{\sf r}^{n+\half}_{\ssy C}:=g(\uu^{n+\half})\otimes\left[\tfrac{\uu^{n+1}+\uu^n}{2}
-\uu^{n+\half}\right],
\quad
{\sf r}^{n+\half}_{\ssy D}:={\sf I}_h\left[\tfrac{f(t_{n+1},\cdot)+f(t_n,\cdot)}{2}
-f(t^{n+\half},\cdot)\right]\\
\end{gather*}
and
\begin{gather*}
{\sf r}^{\quarter}_{\ssy A}:=\tfrac{\uu^{\half}-\uu^0}{(\tau/2)}
-{\sf I}_h\big[\uu_t\big(t^{\quarter},\cdot\big)\big],\quad
{\sf r}^{\quarter}_{\ssy B}:=\PP\left[\tfrac{\uu_{xx}(t^{\half},\cdot)+\uu_{xx}(t_0,\cdot)}{2}
-u_{xx}(t^{\quarter},\cdot)\right],\\
{\sf r}^{\quarter}_{\ssy C}:=-\left[g(\uu^{\quarter})-g(\uu^0)\right]
\otimes\uu^{\quarter}
+g(\uu^0)\otimes\left[\tfrac{\uu^{\half}+\uu^0}{2}-\uu^{\quarter}\right],\\
{\sf r}^{\quarter}_{\ssy D}:={\sf I}_h\left[\tfrac{f(t^{\half},\cdot)+f(t_0,\cdot)}{2}
-f(t^{\quarter},\cdot)\right].\\
\end{gather*}
Applying the Taylor formula we obtain
\begin{equation}\label{USNORTH_02A}
\begin{split}
({\sf r}_{\ssy A}^{n+\half})_j=&\,\tfrac{\tau^2}{2}\,\int_0^{\half}\left[\,s^2\,\uu_{ttt}(t_n+s\,\tau,x_j)
+(\tfrac{1}{2}-s)^2\,\uu_{ttt}(t^{n+\half}+s\,\tau,x_j)\,\right]\;ds,\\
({\sf r}^{n+\half}_{\ssy C})_j=&\,\tfrac{g(\uu(t^{n+\half},x_j))}{2}\,\tau^2
\,\int_0^{\frac{1}{2}}\left[\,s\,\uu_{tt}(t_n+s\,\tau,x_j)
+(\tfrac{1}{2}-s)\,\uu_{tt}(t^{n+\half}+s\,\tau,x_j)\,\right]\;ds,\\
({\sf r}_{\ssy B}^{n+\half})_j=&\,
\tfrac{\tau^2}{2}\,\int_0^{\frac{1}{2}}\left[\,s\,\uu_{xxtt}(t_n+s\,\tau,x_j)
+(\tfrac{1}{2}-s)\,\uu_{xxtt}(t^{n+\half}+s\,\tau,x_j)\,\right]\;ds,\\
({\sf r}_{\ssy D}^{n+\half})_j=&\,
\tfrac{\tau^2}{2}\,\int_0^{\frac{1}{2}}\left[\,s\,f_{tt}(t_n+\tau\,s,x_j)
+(\tfrac{1}{2}-s)\,f_{tt}(t^{n+\half}+\tau\,s,x_j)\,\right]\;ds\\
\end{split}
\end{equation}
for $j=0,\dots,J+1$ and $n=0,\dots,N-1$, and
\begin{equation}\label{USNORTH_04A}
\begin{split}
({\sf r}_{\ssy A}^{\quarter})_j=&\,\tfrac{\tau^2}{2}\,\int_0^{\quarter}\left[\,s^2\,\uu_{ttt}(s\,\tau,x_j)
+(\tfrac{1}{4}-s)^2\,\uu_{ttt}(t^{\quarter}+s\,\tau,x_j)\,\right]\;ds,\\
({\sf r}^{\quarter}_{\ssy C})_j=&\,-\uu(t^{\quarter},x_j)\,\tau\,
\int_{0}^{\ssy\frac{1}{4}}g'(u(s\,\tau,x_j))\,u_t(s\,\tau,x_j)\;ds\\
&\,+\tfrac{g(\uu_0(x_j))}{2}\,\tau^2\,\int_0^{\ssy\frac{1}{4}}\left[\,s\,\uu_{tt}(s\,\tau,x_j)
+(\tfrac{1}{4}-s)\,\uu_{tt}(t^{\quarter}+s\,\tau,x_j)\,\right]\;ds,\\
({\sf r}_{\ssy B}^{\quarter})_j=&\,
\tfrac{\tau^2}{2}\,\int_0^{\ssy\frac{1}{4}}\left[\,s\,\uu_{xxtt}(s\,\tau,x_j)
+(\tfrac{1}{4}-s)\,\uu_{xxtt}(t^{\quarter}+s\,\tau,x_j)\,\right]\;ds,\\
({\sf r}_{\ssy D}^{\quarter})_j=&\,
\tfrac{\tau^2}{2}\,\int_0^{\frac{1}{4}}\left[\,s\,f_{tt}(t_n+\tau\,s,x_j)
+(\tfrac{1}{4}-s)\,f_{tt}(t^{n+\half}+\tau\,s,x_j)\,\right]\;ds\\
\end{split}
\end{equation}
for $j=0,\dots,J+1$. Then, from
\eqref{USNORTH_01}, \eqref{USNORTH_02A} and \eqref{USNORTH_04A},
we arrive at
\begin{gather}
\|{\sf r}_{\ssy A}^{\quarter}\|_{0,h}+\|{\sf r}^{\quarter}_{\ssy B}\|_{0,h}
+\|{\sf r}^{\quarter}_{\ssy D}\|_{0,h}
+\max_{0\leq{n}\leq{\ssy N-1}}
\|{\sf r}^{n+\half}\|_{0,h}\leq\,{\widehat{\sf C}}_{1,1}\,\tau^2,\label{SHELL_01a}\\
\|{\sf r}_{\ssy C}^{\quarter}\|_{0,h}\leq\,{\widehat{\sf C}}_{1,2}\,\tau\label{SHELL_01b}
\end{gather}
and
\begin{gather}
\max_{0\leq{n}\leq{\ssy N-1}}|{\sf r}^{n+\half}|_{1,h}
\leq\,{\widehat{\sf C}}_{1,3}\,\tau^2,\label{SHELL_02a}\\
|{\sf r}_{\ssy C}^{\quarter}|_{1,h}\leq\,{\widehat{\sf C}}_{1,4}\,\tau.\label{SHELL_02b}
\end{gather}
%
%
%
\subsection{Space consistency error}
%
%
Also, let ${\sf s}^{\quarter}\in\dspace$ be defined by
\begin{equation}\label{NORAD_11}
\tfrac{\uu^{\half}-\uu^0}{(\tau/2)}=\Delta_h\left(\tfrac{\uu^{\half}+\uu^0}{2}\right)
+g(\uu^0)\otimes\left(\tfrac{\uu^{\half}+\uu^0}{2}\right)
+\PP\left[\tfrac{f(t^{\half},\cdot)+f(t_0,\cdot)}{2}\right]+{\sf s}^{\quarter}
\end{equation}
and, for $n=0,\dots,N-1$, let ${\sf s}^{n+\half}\in\dspace$ be given by
\begin{equation}\label{NORAD_12}
\tfrac{\uu^{n+1}-\uu^n}{\tau}=\Delta_h\left(\tfrac{\uu^{n+1}+\uu^n}{2}\right)
+g(\uu^{n+\half})\otimes\left(\tfrac{\uu^{n+1}+\uu^n}{2}\right)
+\PP\left[f\tfrac{f(t_{n+1},\cdot)+f(t_n,\cdot)}{2}\right]+{\sf s}^{n+\half}.
\end{equation}
Subtracting \eqref{NORAD_11} from \eqref{NORAD_00} and \eqref{NORAD_12} from
\eqref{NORAD_02}, we obtain
\begin{equation}\label{USNORTH_11}
\begin{split}
{\sf r}^{\quarter}-{\sf s}^{\quarter}=&\,\PP\left[\tfrac{\uu_{xx}(t^{\half},\cdot)+\uu_{xx}(t_0,\cdot)}{2}\right]
-\Delta_h\left(\tfrac{\uu^{\half}+\uu^0}{2}\right),\\
{\sf r}^{n+\half}-{\sf s}^{n+\half}=&\,\PP\left[\tfrac{\uu_{xx}(t_{n+1},\cdot)+\uu_{xx}(t_n,\cdot)}{2}\right]
-\Delta_h\left(\tfrac{\uu^{n+1}+\uu^n}{2}\right),\quad n=0,\dots,N-1.
\end{split}
\end{equation}
The use of the Taylor formula yields
\begin{equation*}\label{USARCTIC}
\begin{split}
\left(\PP\left[\uu_{xx}(t,\cdot)\right]
-\Delta_h\left({\sf I}_h[u(t,\cdot)]\right)\right)_j=&\,\tfrac{h^2}{6}
\,\int_0^1(1-y)^3\,\uu_{xxxx}(t,x_j+h\,y)\;dy\\
&+\tfrac{h^2}{6}
\,\int_0^1y^3\,\uu_{xxxx}(t,x_{j-1}+h\,y)\;dy,\\
\end{split}
\end{equation*}
for $j=1,\dots,J$ and $t\in[0,T]$, which along with \eqref{USNORTH_11}
yields
\begin{equation}\label{SHELL_11}
\|{\sf s}^{\quarter}-{\sf r}^{\quarter}\|_{0,h}
+\max_{0\leq{n}\leq{\ssy N-1}}\|{\sf s}^{n+\half}-{\sf r}^{n+\half}\|_{0,h}
\leq\,{\widehat C}_{2}\,h^2.
\end{equation}
%
%
%
%
\subsection{Time consistency error at the intermediate nodes}
%
%
%
%
For $n=1,\dots,N-1$, let ${\sf r}^n\in\dspace$
be determined by
\begin{equation}\label{NORAD_22}
\tfrac{g(\uu^{n+\half})+g(\uu^{n-\half})}{2}=g(\uu^n)+{\sf r}^n.
\end{equation}
Setting $w(t,x)=g(u(t,x))$ and using, again, the Taylor formula we have
%
%
%
\begin{equation}\label{USNORTH_21}
{\sf r}^n_j=\tfrac{1}{2}\,\tau^2\,\int_{0}^{\half}
\left[\,(\tfrac{1}{2}-s)\,w_{tt}(t_n+s\,\tau,x_j)
+s\,w_{tt}(t^{n-\half}+s\,\tau,x_j)\right]\;ds
\end{equation}
for $j=0,\dots,J+1$ and $n=1,\dots,N-1$, which, easily, yields
\begin{gather}
\max_{1\leq{n}\leq{\ssy N-1}}\|{\sf r}^n\|_{0,h}
+\max_{1\leq{n}\leq{\ssy N-1}}|{\sf r}^n|_{1,h}
\leq{\widehat{\sf C}}_{3,1}\,\tau^2,\label{SHELL_21}\\
\max_{2\leq{n}\leq{\ssy N-1}}\|{\sf r}^n-{\sf r}^{n-1}\|_{0,h}
+\max_{2\leq{n}\leq{\ssy N-1}}|{\sf r}^n-{\sf r}^{n-1}|_{1,h}
\leq{\widehat{\sf C}}_{3,2}\,\tau^3.\label{SHELL_22}
\end{gather}
%
%
%
%
%
\subsection{A Discrete Ellliptic Projection}
Let $v\in{\sf C}^2({\mathcal I};\rset)$. Then, we define ${\sf R}_h(v)\in\dspace$ (cf. \cite{AD1991})
by requiring
\begin{equation}\label{ELP_1}
\Delta_h({\sf R}_hv)=\PP(v'').
\end{equation}
Using the Taylor formula, it follows that
\begin{equation}\label{ELP_2}
\Delta_h(\PP v)-\PP(v'')=\tfrac{h^2}{12}\,{\sf r}^{\ssy{\rm E}}(v)
\end{equation}
where ${\sf r}^{\ssy{\rm E}}(v)\in\dspace$ is defined by
\begin{equation}\label{ELP_3}
({\sf r}^{\ssy{\rm E}}(v))_j:=\int_0^1\left[\,(1-y)^3\,v''''(x_j+h\,y)
+y^3\,v''''(x_{j-1}+h\,y)\,\right]\;dy,
\quad j=1,\dots,J.
\end{equation}
\par
First, subtract \eqref{ELP_1} from \eqref{ELP_2} to get
\begin{equation}\label{ELP_4}
\Delta_h(\PP v-{\sf R}_h v)=\tfrac{h^2}{12}\,{\sf r}^{\ssy{\rm E}}(v).
\end{equation}
Then, take the $(\cdot,\cdot)_{0,h}-$inner product of both sides of \eqref{ELP_4} with $(\PP v-{\sf R}_h v)$
and use \eqref{NewEra2}, the Cauchy-Schwarz inequality and \eqref{dpoincare} to obtain
\begin{equation}\label{ELP_5}
|{\sf R}_h v-\PP v|_{1,h}\leq\,\tfrac{{\sf L}}{12}\,h^2\,\|{\sf r}^{\ssy{\rm E}}(v)\|_{0,h}.
\end{equation}
\par
Finally, we use \eqref{ELP_5} to have
\begin{equation}\label{ELP_6}
\begin{split}
\Big|{\sf R}_h\left[\tfrac{\uu(t_{n+1},\cdot)-u(t_n,\cdot)}{\tau}\right]
-\left(\tfrac{u^{n+1}-u^n}{\tau}\right)\Big|_{1,h}\leq&\,\tfrac{\sf L}{12}\,h^2\,
\Big\|{\sf r}^{\ssy{\rm E}}\left[\tfrac{\uu(t_{n+1},\cdot)-u(t_n,\cdot)}{\tau}\right]\Big\|_{0,h}\\
\leq&\,\tfrac{{\sf L}^{\frac{3}{2}}}{12}
\,h^2\,\max_{\ssy{[0,T]\times{\mathcal I}}}|u_{txxxx}|,
\quad n=0,\dots,N-1.\\
\end{split}
\end{equation}
%
%
\section{Convergence Analysis}\label{Section4}
%
%
\subsection{A mollifier}
For $\delta>0$, let $\gff_{\delta}\in{\sf C}^3(\rset;\rset)$ (cf. \cite{KarMak},
\cite{Georgios1}) be an odd fuction defined by
\begin{equation}\label{ni_defin}
\gff_{\delta}(x):=\left\{
\begin{aligned}
&x,\hskip2.40truecm
\mbox{if}\ \ x\in[0,\delta],\\
&p_{\delta}(x),\hskip1.75truecm
\mbox{if}\ \ x\in (\delta,2\delta],\\
&2\,\delta,\hskip2.22truecm\mbox{if}\ \ x> 2\delta,\\
\end{aligned}
\right.\quad\forall\,x\ge 0,
\end{equation}
%
%
%
where $p_{\delta}$ is the unique polynomial of ${\mathbb P}^7[\delta,2\,\delta]$
that satisfies the following conditions:
\begin{equation*}
p_{\delta}(\delta)=\delta,\,\,\,p_{\delta}'(\delta)=1,
\,\,\,p_{\delta}''(\delta)=p_{\delta}'''(\delta)=0,
\,\,\,p_{\delta}(2\,\delta)=2\,\delta,
\,\,\,p_{\delta}'(2\,\delta)=p_{\delta}''(2\,\delta)=p_{\delta}'''(2\,\delta)=0.
\end{equation*}
%
\subsection{The (MBRFD) scheme}\label{The_MBRFD}
%
The modified version of the (BRFD) method (cf. \cite{Akrivis1}, \cite{KarMak},
\cite{Georgios1}) is a recursive procedure
that, for given $\delta>0$, derives approximations
$(V^n_{\delta})_{n=0}^{\ssy N}\subset\dspace$ of the solution
$\uu$ performing the steps below.
\par\vskip0.2truecm\par\noindent
{\tt Step 1}: Let $V_{\delta}^0\in\dspace$  be defined by
\begin{equation}\label{BR_CB1}
V_{\delta}^0:=\uu^0
\end{equation}
and $V_{\delta}^{\half}\in\dspace$ be specified by
\begin{equation}\label{BR_CB2}
\tfrac{V_{\delta}^{\half}-V_{\delta}^0}{(\tau/2)}
=\Delta_{h}\left(\,\tfrac{V_{\delta}^{\half}+V_{\delta}^0}{2}\,\right)
+g\big(\uu^0\big)\otimes\left(\tfrac{V_{\delta}^{\half}+V_{\delta}^0}{2}\right)
+\PP\left[\tfrac{f(t^{\half},\cdot)+f(t_0,\cdot)}{2}\right].
\end{equation}
\par\vskip0.2truecm\par\noindent
{\tt Step 2}: Define $\Phi_{\delta}^{\half}\in\gspace$ by
\begin{equation}\label{BR_CB3}
\Phi^{\half}_{\delta}:=g\big(\gff_{\delta}\big(V_{\delta}^{\half}\big)\big)
\end{equation}
and find $V_{\delta}^1\in\dspace$ such that
\begin{equation}\label{BR_CB4}
\tfrac{V^1_{\delta}-V^0_{\delta}}{\tau}=
\Delta_h\left(\tfrac{V^1_{\delta}+V^0_{\delta}}{2}\right)
+\gff_{\delta}\big(\Phi_{\delta}^{\half}\big)
\otimes\left(\tfrac{V^1_{\delta}+V^0_{\delta}}{2}\right)
+\PP\big[\tfrac{f(t_{1},\cdot)+f(t_0,\cdot)}{2}\big].
\end{equation}
\par\vskip0.2truecm\par\noindent
{\tt Step 3}: For $n=1,\dots,N-1$, first define
$\Phi_{\delta}^{n+\half}\in\gspace$ by
\begin{equation}\label{BR_CB5}
\Phi_{\delta}^{n+\half}:=2\,g\big(\gff_{\delta}\big(V_{\delta}^n\big)\big)
-\Phi_{\delta}^{n-\half}
\end{equation}
and, then, find $V^{n+1}_{\delta}\in\dspace$ such
that
\begin{equation}\label{BR_CB6}
\tfrac{V^{n+1}_{\delta}-V^n_{\delta}}{\tau}=
\Delta_h\left(\tfrac{V^{n+1}_{\delta}+V^n_{\delta}}{2}\right)
+\gff_{\delta}\big(\Phi_{\delta}^{n+\half}\big)
\otimes\left(\tfrac{V^{n+1}_{\delta}+V^n_{\delta}}{2}\right)
+\PP\big[\tfrac{f(t_{n+1},\cdot)+f(t_n,\cdot)}{2}\big].
\end{equation}
%
%
\subsection{Existence and uniqueness of the (MBRFD) approximations}
%
%
\begin{proposition}\label{BR_Exist}
Let $g_{\star}^0=\max_{\ssy{x\in\mathcal I}}|g(u_0(x))|$, $\delta\ge\,g_{\star}^0$
and ${\sf C}_{\delta}^{\ssy\sf BR, I}:=\frac{1}{4}\,\sup_{\ssy\rset}|\gff_{\delta}|$.
When $\tau\,{\sf C}_{\delta}^{\ssy\sf BR, I}\leq\tfrac{1}{2}$, then
the modified \text{\rm(BRFD)} approximations
%
%
are well-defined.
\end{proposition}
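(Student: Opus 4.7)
The plan is to verify, step by step, that each of \eqref{BR_CB2}, \eqref{BR_CB4} and \eqref{BR_CB6} uniquely determines its corresponding unknown in $\dspace$; the intermediate quantities $\Phi_\delta^{\half}$ and $\Phi_\delta^{n+\half}$ from \eqref{BR_CB3} and \eqref{BR_CB5} are prescribed by explicit formulas and therefore require no further justification. Proceeding inductively on $n$, once the previously constructed approximations are in hand, each of these three equations can be rearranged into the generic form
\begin{equation*}
\bigl(I - \gamma\,\Delta_h - \gamma\,M_W\bigr)\,V = R,
\end{equation*}
where $R\in\dspace$ and $W\in\gspace$ are already known, and $M_W:\dspace\to\dspace$ denotes the multiplication operator $V\mapsto W\otimes V$. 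Concretely, \eqref{BR_CB2} corresponds to $\gamma=\tau/4$ and $W=g(\uu^0)$, whereas \eqref{BR_CB4} and \eqref{BR_CB6} correspond to $\gamma=\tau/2$ and $W=\gff_\delta(\Phi_\delta^{n+\half})$.

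Since $\dspace$ is finite dimensional, existence and uniqueness reduce to injectivity of the operator on the left. To prove injectivity, I would take any $V\in\dspace$ with $(I-\gamma\,\Delta_h-\gamma\,M_W)\,V=0$, form the $(\cdot,\cdot)_{0,h}$ inner product with $V$, and apply \eqref{NewEra2} together with the pointwise estimate $|(W\otimes V,V)_{0,h}|\le|W|_{\infty,h}\,\|V\|_{0,h}^{2}$ to obtain
\begin{equation*}
\|V\|_{0,h}^{2}+\gamma\,|V|_{1,h}^{2}\le\gamma\,|W|_{\infty,h}\,\|V\|_{0,h}^{2}.
\end{equation*}

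The remaining task is to bound $\gamma\,|W|_{\infty,h}$ by $1$ in each of the three cases. In Step 1 the hypothesis $\delta\ge g_\star^{0}$ yields $|W|_{\infty,h}=|g(\uu^0)|_{\infty,h}\le g_\star^{0}\le\delta\le\sup_{\ssy\rset}|\gff_\delta|=4\,{\sf C}_\delta^{\ssy\sf BR,I}$; in Steps 2 and 3 the intrinsic boundedness of the mollifier gives directly $|W|_{\infty,h}=|\gff_\delta(\Phi_\delta^{n+\half})|_{\infty,h}\le 4\,{\sf C}_\delta^{\ssy\sf BR,I}$. Combining with $\gamma\le\tau/2$ and the standing assumption $\tau\,{\sf C}_\delta^{\ssy\sf BR,I}\le\tfrac{1}{2}$ produces $\gamma\,|W|_{\infty,h}\le 2\,\tau\,{\sf C}_\delta^{\ssy\sf BR,I}\le 1$, so $\|V\|_{0,h}^{2}+\gamma\,|V|_{1,h}^{2}\le\|V\|_{0,h}^{2}$, forcing $|V|_{1,h}=0$; since $|\cdot|_{1,h}$ is a norm on $\dspace$, this yields $V=0$. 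No real obstacle arises, the only delicate point being that in Step 1 the multiplier $g(\uu^0)$ appears unmollified, which is exactly why the hypothesis $\delta\ge g_\star^{0}$ is imposed.
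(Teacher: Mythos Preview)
Your proposal is correct and follows essentially the same approach as the paper: reduce each implicit step to the invertibility of a linear operator on the finite-dimensional space $\dspace$, and establish injectivity via the energy identity $(\,\cdot\,,V)_{0,h}$ together with \eqref{NewEra2} and the bound $\gamma\,|W|_{\infty,h}\le 1$. The only cosmetic difference is that the paper handles Step~1 by first rewriting $g(\uu^0)=\gff_\delta(g(\uu^0))$ (using $\delta\ge g_\star^0$) so that all three steps share the common multiplier form $\gff_\delta(\zeta)$, whereas you bound $|g(\uu^0)|_{\infty,h}$ directly via $g_\star^0\le\delta\le\sup_{\ssy\rset}|\gff_\delta|$; both routes lead to the same estimate.
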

%
%
%
%
\begin{proof}
Let $\zeta\in\gspace$, $\varepsilon\in(0,1]$ and
${\sf T}_{\ssy{\sf BR}}:\dspace\rightarrow\dspace$
be a linear operator given by
\begin{equation*}
{\sf T}_{\ssy{\sf BR}}v:=2\,v-\,\varepsilon\,\tau\,\Delta_hv
-\varepsilon\,\tau\,\left[\,\gff_{\delta}(\zeta)\otimes v\,\right]
\quad\forall\,v\in\dspace.
\end{equation*}
Since $\delta\ge g_{\star}^0$, the definition of $\gff_{\delta}$ yields
that $\gff_{\delta}(g(u^0))=g(u^0)$. Thus, from \eqref{BR_CB2},
\eqref{BR_CB4} and \eqref{BR_CB6} it is easily seen that the
well-posedness of $V^{\half}_{\delta}$ and
$(V_{\delta}^n)_{n=1}^{\ssy N}$ follows easily by
securing the invertibility of ${\sf T}_{\ssy{\sf BR}}$.
Moving towards to this target, first we use \eqref{NewEra2} to obtain
\begin{equation}\label{dilesi}
\begin{split}
({\sf T}_{\ssy{\sf BR}}v,v)_{0,h}=&\,2\,\|v\|_{0,h}^2+\tau\,\varepsilon\,|v|_{1,h}^2
-\tau\,\varepsilon\,\left(\gff_{\delta}(\zeta)\otimes v,v\right)_{0,h}\\
\ge&\,2\,\|v\|_{0,h}^2+\tau\,\varepsilon\,|v|_{0,h}^2
-\tau\,\varepsilon\,\|v\|_{0,h}^2\,\,|\gff_{\delta}(\zeta)|_{\infty,h}\\
\ge&\,\tau\,\varepsilon\,|v|_{0,h}^2+4\,\|v\|_{0,h}^2\,\left(\,\tfrac{1}{2}-\tfrac{\tau}{4}
\,\max_{\ssy\rset}|\gff_{\delta}|\,\right)\\
\ge&\,\tau\,\varepsilon\,|v|_{1,h}^2+4\,\|v\|_{0,h}^2
\,\left(\,\tfrac{1}{2}-\tau\,{\sf C}_{\delta}^{\ssy\sf BR, I}\,\right)
\quad\forall\,v\in\dspace.\\
\end{split}
\end{equation}
Let us assume that $\tau\,{\sf C}_{\delta}^{\ssy\sf BR, I}\leq\tfrac{1}{2}$.
When $v\in{\rm Ker}({\sf T}_{\ssy{\sf BR}})$,
then $({\sf T}_{\ssy{\sf BR}}v,v)_{0,h}=0$, which, along with \eqref{dilesi},
yields $|v|_{1,h}=0$, or, equivalently, $v=0$. The latter argument
shows that ${\rm Ker}({\sf T}_{\ssy{\sf BR}})=\{0\}$ and,
thus, ${\sf T}_{\ssy{\sf BR}}$
is invertible, since $\dspace$ has finite dimension.
\end{proof}
\begin{remark}\label{remark_A}
Let us assume that $\tau\,{\sf C}_{\delta}^{\ssy{\sf BR, I}}\leq\tfrac{1}{2}$
and $\delta\ge g_{\star}^0$. Since $V_{\delta}^0:=u^0$ and $V^{\half}_{\delta}$
is well-defined, in view of \eqref{BR_CB2} and \eqref{BRS_12}, we conclude that
$U^{\half}$ is, also, well-defined and $U^{\half}=V^{\half}_{\delta}$.
\end{remark}
%
%
%
\subsection{Convergence of the (MBRFD) scheme}\label{Section_Conv_MBRFD}
%
In the theorem below, we investigate the convergence properties of the
modified (BRFD) approximations.
%
%
%
\begin{thm}\label{BR_CB_Conv}
Let $u_{\star}:=\max\limits_{\ssy{[0,T]\times{\mathcal I}}}|\uu|$,
$g_{\star}:=\max\limits_{\ssy{[0,T]\times{\mathcal I}}}|g(\uu)|$,
$\ddelta\ge\max\{u_{\star},g_{\star}\}$ and
$\tau\,{\sf C}^{\ssy{\sf BR,I}}_{\ddelta}\leq\tfrac{1}{2}$,
where ${\sf C}^{\ssy{\sf BR,I}}_{\ddelta}$ is the constant specified
in Proposition~\ref{BR_Exist}.
Then, there exist constants ${\sf C}^{\ssy{\sf BCV},1}_{\ssy\ddelta}\ge{\sf C}^{\ssy{\sf BR,I}}_{\ddelta}$,
${\sf C}^{\ssy{\sf BCV},2}_{\ddelta}>0$, ${\sf C}_{\ddelta}^{\ssy{\sf BCV},3}>0$
and ${\sf C}_{\ddelta}^{\ssy{\sf BCV},4}>0$,
independent of $\tau$ and $h$, such that: if
$\tau\,{\sf C}^{\ssy{\sf BCV},1}_{\ddelta}\leq\tfrac{1}{2}$, then
\begin{equation}\label{BR_CB_cnv_3}
|\uu^{\half}-V_{\ddelta}^{\half}|_{1,h}
\leq\,{\sf C}_{\ddelta}^{\ssy{\sf BCV},2}\,(\tau^2+\tau^{\half}\,h^2),
\end{equation}
\begin{equation}\label{BR_CB_cnv_1}
\max_{0\leq{m}\leq{\ssy N-1}}\|g(\uu^{m+\half})-\Phi_{\ddelta}^{m+\half}\|_{0,h}
+\max_{0\leq{m}\leq{\ssy N}}|\uu^m-V_{\ddelta}^m|_{1,h}
\leq\,{\sf C}_{\ddelta}^{\ssy{\sf BCV},3}\,(\tau^2+h^2)
\end{equation}
and
\begin{equation}\label{BR_CB_cnv_2}
\max_{0\leq{m}\leq{\ssy N-1}}\big|g(\uu^{m+\half})-\Phi_{\ddelta}^{m+\half}\big|_{1,h}
\leq\,{\sf C}_{\ddelta}^{\ssy{\sf BCV},4}\,(\tau^2+h^2).
\end{equation}
\end{thm}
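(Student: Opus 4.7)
My plan is to set up two errors relative to the discrete elliptic projection ${\sf R}_h$ of Section~\ref{Section3}: $E^n := V_{\ddelta}^n - {\sf R}_h u(t_n,\cdot)$ for $n = 0, \dots, N$, an analogous $E^{\half}$ at the half step, and $F^{n+\half} := \Phi_{\ddelta}^{n+\half} - {\sf I}_h[g(u(t^{n+\half},\cdot))]$ for $n = 0, \dots, N-1$. Working with ${\sf R}_h$ lets $\Delta_h$ in the error equation reproduce the exact second derivative of $u$, at the cost of an $O(h^2)$ defect controlled by \eqref{ELP_5}--\eqref{ELP_6}. The resulting error equations carry, as forcing terms, the time- and space-consistency remainders of Section~\ref{Section3} together with perturbations of the nonlinearity, to be estimated via Lemmas~\ref{Lemma_D2}--\ref{Lemma_D3}. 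I proceed by strong induction on $n$ under the a posteriori hypothesis that $|V_{\ddelta}^m|_{1,h}$ and $|\Phi_{\ddelta}^{m+\half}|_{1,h}$ remain bounded by a constant depending only on $u_\star$, $g_\star$ and $\ddelta$; by \eqref{LH1} this keeps $|V_{\ddelta}^m|_{\infty,h} \leq \ddelta$ and $|\Phi_{\ddelta}^{m+\half}|_{\infty,h} \leq \ddelta$, so the mollifier $\gff_{\ddelta}$ acts as the identity along the numerical trajectory.

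For \eqref{BR_CB_cnv_3}, since $V_{\ddelta}^0 = u^0$ and $\ddelta \geq g_\star \geq g_\star^0$, the nonlinear term in \eqref{BR_CB2} coincides with $g(u^0) \otimes \cdot$. Subtracting \eqref{BR_CB2} from \eqref{NORAD_11}, testing against $E^{\half}$, and invoking \eqref{NewEra2} produces an inequality of the form
\begin{equation*}
\|E^{\half}\|_{0,h}^2 + \tfrac{\tau}{2}\,|E^{\half}|_{1,h}^2 \leq C\,\tau\,\bigl(\,\|{\sf s}^{\quarter}\|_{0,h}^2 + \|{\sf r}^{\quarter}_{\ssy C}\|_{0,h}^2 + \tau^{-2}\,|({\sf R}_h - \PP)(u^{\half} - u^0)|_{1,h}^2\,\bigr),
\end{equation*}
whose right-hand side is $O(\tau^5 + \tau\,h^4)$ by \eqref{SHELL_01a}--\eqref{SHELL_01b}, \eqref{SHELL_11} and \eqref{ELP_5}. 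Dividing by $\tau/2$ and taking square roots yields $|E^{\half}|_{1,h} \leq C(\tau^2 + \tau^{\half} h^2)$; the $\tau^{\half}$ factor on the space part reflects that the half-step integrates the elliptic-projection defect only once over an interval of length $\tau/2$. The desired bound \eqref{BR_CB_cnv_3} follows from the triangle inequality $|u^{\half} - V_{\ddelta}^{\half}|_{1,h} \leq |u^{\half} - {\sf R}_h u^{\half}|_{1,h} + |E^{\half}|_{1,h}$ and \eqref{ELP_5}.

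For \eqref{BR_CB_cnv_1}--\eqref{BR_CB_cnv_2}, the standard Crank--Nicolson-type test of the $E$-error equation at level $n \to n+1$ against $(E^{n+1}+E^n)/2$, combined with \eqref{NewEra2} and Lemma~\ref{Lemma_D3}, bounds $\|E^{n+1}\|_{0,h}^2 - \|E^n\|_{0,h}^2 + \tfrac{\tau}{2}|E^{n+1}+E^n|_{1,h}^2$ by $C\tau\bigl(\|F^{n+\half}\|_{0,h}^2 + \|E^{n+1}\|_{0,h}^2 + \|E^n\|_{0,h}^2\bigr)$ plus an $O(\tau(\tau^2+h^2)^2)$ consistency term. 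The main obstacle is that the $\Phi$-error obeys the \emph{undamped} recursion
\begin{equation*}
F^{n+\half} + F^{n-\half} = 2\bigl[\,{\sf I}_h g(u^n) - g(\gff_{\ddelta}(V_{\ddelta}^n))\,\bigr] + 2\,{\sf r}^n,
\end{equation*}
obtained by subtracting \eqref{BR_CB5} from \eqref{NORAD_22}; iterating it directly would accumulate the $O(\tau^2)$ remainders ${\sf r}^n$ into an $O(\tau)$ loss, destroying optimality. Following the hint in the introduction, I take a discrete time derivative: subtracting the identity at level $n-1$ from the one at level $n$ gives
\begin{equation*}
F^{n+\half} - F^{n-\frac{3}{2}} = -2\bigl[\,g(\gff_{\ddelta}(V_{\ddelta}^n)) - g(\gff_{\ddelta}(V_{\ddelta}^{n-1})) - {\sf I}_h(g(u^n) - g(u^{n-1}))\,\bigr] + 2({\sf r}^n - {\sf r}^{n-1}),
\end{equation*}
whose right-hand side is controlled via Lemma~\ref{Lemma_D3} by $|E^n - E^{n-1}|_{1,h}$ and $|E^{n-1}|_{1,h}$ together with the $O(\tau^3)$ remainder in \eqref{SHELL_22}. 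This motivates including $|F^{n+\half}|_{1,h}^2$ together with the time increments $\|E^{n+1} - E^n\|_{0,h}^2/\tau$ and $|F^{n+\half} - F^{n-\frac{3}{2}}|_{1,h}^2/\tau$ in a composite stability quantity that couples the $u$- and $\Phi$-errors, so that the $|\cdot|_{1,h}$-coercivity of the $u$-equation supplies the damping missing from the $\Phi$-recursion. A discrete Gronwall argument applied to this composite energy closes the induction and yields \eqref{BR_CB_cnv_1}--\eqref{BR_CB_cnv_2} by term-selection; the base case at $n = 1$ uses $F^{\half} = {\sf I}_h g(u^{\half}) - g(\gff_{\ddelta}(V_{\ddelta}^{\half}))$ together with \eqref{BR_CB_cnv_3} and Lemma~\ref{Lemma_D2}.
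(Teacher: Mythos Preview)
Your overall plan---taking a discrete time derivative of the $\Phi$-recursion to exploit \eqref{SHELL_22}, and coupling it to the $u$-error through a composite energy---is exactly the paper's idea. The gap is in your choice of test function for the $u$-equation. Testing against $(E^{n+1}+E^n)/2$ produces
\[
\|E^{n+1}\|_{0,h}^2-\|E^n\|_{0,h}^2+\tfrac{\tau}{2}\,|E^{n+1}+E^n|_{1,h}^2\le\cdots,
\]
which gives $L_t^\infty(L_x^2)$ control of $E$ and $L_t^2(H_x^1)$ dissipation, but \emph{no} control of the time increment $E^{n+1}-E^n$ in any norm, and no pointwise-in-$n$ control of $|E^n|_{1,h}$. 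Both are essential: the right-hand side of your differenced $\Phi$-equation is bounded, via Lemma~\ref{Lemma_D3}, by $\|E^n-E^{n-1}\|_{0,h}$ (for the $L^2$ estimate on $F$) and by $|E^n-E^{n-1}|_{1,h}$ (for the $H^1$ estimate), and the conclusion \eqref{BR_CB_cnv_1} itself asks for $\max_n|u^n-V^n_{\ddelta}|_{1,h}$. With only the symmetric test the composite Gronwall cannot close, because the increment terms you propose to include in the stability quantity are never actually produced on the left-hand side.

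The paper fixes this by testing against the \emph{increment} rather than the average, and in two stages. First, working with the plain error $\ee^n:=u^n-V^n_{\ddelta}$ (no elliptic projection yet), it tests against $(\ee^{n+1}-\ee^n)$ and obtains $\|\ee^{n+1}-\ee^n\|_{0,h}^2+\tau\bigl(|\ee^{n+1}|_{1,h}^2-|\ee^n|_{1,h}^2\bigr)\le\cdots$; this supplies both $\|\ee^{n+1}-\ee^n\|_{0,h}$ and a telescoping $|\ee^n|_{1,h}^2$, closing the composite argument for $\|\btheta^n\|_{0,h}$ and giving \eqref{BR_CB_cnv_1}. Only then does it introduce the elliptic-projection error $\bhta^n$ and test against $\Delta_h(\bhta^{n+1}-\bhta^n)$, which yields $|\bhta^{n+1}-\bhta^n|_{1,h}^2+\tau\bigl(\|\Delta_h\bhta^{n+1}\|_{0,h}^2-\|\Delta_h\bhta^n\|_{0,h}^2\bigr)\le\cdots$ and hence the $H^1$ increment needed to close \eqref{BR_CB_cnv_2}. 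A side remark: your a posteriori induction hypothesis that $\gff_{\ddelta}$ acts as the identity on the numerical trajectory is unnecessary here---the whole point of the mollification is that $\gff_{\ddelta}$ and its derivatives are globally bounded, so all estimates in Theorem~\ref{BR_CB_Conv} go through using only $\gff_{\ddelta}(u^n)=u^n$ for the \emph{exact} solution (guaranteed by $\ddelta\ge u_\star$); removing the mollifier is the separate content of Theorem~\ref{DR_Final}.
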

%
%
%
\begin{proof}
To simplify the notation, we set
$\uu_{\star}^0:=\max_{\ssy{\mathcal I}}|u^0|$,
$\ee^{\half}:=\uu^{\half}-V_{\ddelta}^{\half}$,
$\ee^m:=\uu^m-V_{\ddelta}^m$
for $m=0,\dots,N$, and $\btheta^m:=g(\uu^{m+\half})-\Phi_{\ddelta}^{m+\half}$
for $m=0,\dots,N-1$.
In the sequel,  we will use the symbol $C$ to denote a generic constant that is
independent of $\tau$, $h$ and $\ddelta$, and may changes value from one line to the other.
Also, we will use the symbol $C_{\ddelta}$ to denote a generic constant that depends on
$\ddelta$ but is independent of $\tau$, $h$, and may changes value from one line to the other.
%
%
\par\noindent\vskip0.4truecm\par\noindent
$\boxed{{\tt Part\,\,\,1}:}$ Since $\ee^0=0$, after subtracting
\eqref{BR_CB2} from \eqref{NORAD_11} we obtain
\begin{equation}\label{BR_EE_a}
\ee^{\half}=\tfrac{\tau}{4}\,\Delta_h\ee^{\half}
+\tfrac{\tau}{4}\,\left[\,g(u^0)\otimes\ee^{\half}\,\right]
+\tfrac{\tau}{2}\,{\sf s}^{\quarter}.
\end{equation}
%
%
Next, take the $(\cdot,\cdot)_{0,h}-$inner product of
\eqref{BR_EE_a} with $\ee^{\half}$, and then use \eqref{NewEra2},
the Cauchy-Schwarz inequality, \eqref{USNORTH_01}, \eqref{SHELL_01a},
\eqref{SHELL_01b}, \eqref{SHELL_11} and the arithmetic mean inequality to get
\begin{equation*}
\begin{split}
\|\ee^{\half}\|_{0,h}^2+\tfrac{\tau}{4}\,|\ee^{\half}|_{1,h}^2
=&\,\tfrac{\tau}{4}\,(g(u^0)\otimes\ee^{\half},\ee^{\half})_{0,h}
+\tfrac{\tau}{2}\,({\sf s}^{\quarter},\ee^{\half})_{0,h}\\
\leq&\,\tfrac{\tau}{4}\,|g(u^0)|_{\infty,h}\,\|\ee^{\half}\|_{0,h}^2
+\tfrac{\tau}{2}\,\left[\,\|{\sf s}^{\quarter}-{\sf r}^{\quarter}\|_{0,h}
+\|{\sf r}^{\quarter}\|_{0,h}\,\right]\,\|\ee^{\half}\|_{0,h}\\
\leq&\,\tfrac{\tau}{4}\,|g(u^0)|_{\infty,h}\,\|\ee^{\half}\|_{0,h}^2
+C\,(\tau^2+\tau\,h^2)\,\|\ee^{\half}\|_{0,h}\\
\leq&\,\tfrac{\tau}{4}\,\max_{|x|\in[0,u^0_{\star}]}|g(x)|\,\|\ee^{\half}\|_{0,h}^2
+C\,(\tau^2+\tau\,h^2)^2+\tfrac{1}{2}\,\|\ee^{\half}\|_{0,h}^2.\\
\end{split}
\end{equation*}
%
%
Let ${\sf C}^{\ssy{\sf BR, II}}_{\ddelta}:=
\max\{\tfrac{1}{2}\,\max_{|x|\in[0,\uu^0_{\star}]}|g(x)|,
{\sf C}^{\ssy{\sf BR, I}}_{\ddelta}\}$ and
$\tau\,{\sf C}^{\ssy{\sf BR, II}}_{\ddelta}\leq\tfrac{1}{2}$.
Then, the inequality above yields that
\begin{equation}\label{BR_reco_1}
\|\ee^{\half}\|_{0,h}^2+\tau\,|\ee^{\half}|_{1,h}^2
\leq\,C\,(\tau^2+\tau\,h^2)^2.
\end{equation}
Taking the $(\cdot,\cdot)_{0,h}-$inner product of
\eqref{BR_EE_a} with $\Delta_h\ee^{\half}$,
and then using \eqref{NewEra2}, we obtain
\begin{equation}\label{BR_reco_2}
4\,|\ee^{\half}|_{1,h}^2+\tau\,\|\Delta_h\ee^{\half}\|_{0,h}^2
={\sf a}^1+{\sf a}^2,
\end{equation}
where
\begin{equation*}
\begin{split}
{\sf a}^1:=&\,-\,\tau\,(g(u^0)\otimes\ee^{\half},\Delta_h\ee^{\half})_{0,h},\\
{\sf a}^2:=&\,-2\,\tau\,(\eta^{\quarter},\Delta_h\ee^{\half})_{0,h}.\\
\end{split}
\end{equation*}
Now, we use the Cauchy-Schwarz inequality, the arithmetic mean inequality
and \eqref{BR_reco_1}, to have
\begin{equation}\label{BR_reco_3}
\begin{split}
{\sf a}^1\leq&\,\tau\,\max_{|x|\in[0,u^0_{\star}]}|g(x)|\,\|\ee^{\half}\|_{0,h}
\,\|\Delta_h\ee^{\half}\|_{0,h}\\
\leq&\,C\,\tau\,\|\ee^{\half}\|_{0,h}^2
+\tfrac{\tau}{6}\,\|\Delta_h\ee^{\half}\|_{0,h}^2\\
\leq&\,C\,\tau\,(\tau^2+\tau\,h^2)^2
+\tfrac{\tau}{6}\,\|\Delta_h\ee^{\half}\|_{0,h}^2.\\
\end{split}
\end{equation}
Also, \eqref{USNORTH_01}, the Cauchy-Schwarz inequality,
\eqref{NewEra1}, \eqref{SHELL_01a}, \eqref{SHELL_02b}, \eqref{SHELL_11} and
the arithmetic mean inequality, yield
\begin{equation}\label{BR_reco_4}
\begin{split}
{\sf a}^2=&\,-2\,\tau\,\big({\sf s}^{\quarter}-{\sf r}^{\quarter},\Delta_h\ee^{\half}\big)_{0,h}
-2\,\tau\,\big({\sf r}_{\ssy A}^{\quarter}-{\sf r}_{\ssy B}^{\quarter}-{\sf r}_{\ssy D}^{\quarter},
\Delta_h\ee^{\half}\big)_{0,h}+2\,\tau\,\big({\sf r}_{\ssy C}^{\quarter},\Delta_h\ee^{\half}\big)_{0,h}\\
\leq&\,2\,\tau\,\big[\,\|{\sf s}^{\quarter}-{\sf r}^{\quarter}\|_{0,h}+\|{\sf r}^{\quarter}_{\ssy A}\|_{0,h}
+\|{\sf r}^{\quarter}_{\ssy B}\|_{0,h}
+\|{\sf r}^{\quarter}_{\ssy D}\|_{0,h}\,\big]\,\|\Delta_h\ee^{\half}\|_{0,h}
-2\,\tau\,(\!\!(\delta_h{\sf r}^{\quarter}_{\ssy C},\delta_h\ee^{\half})\!\!)_{0,h}\\
\leq&\,C\,\tau\,(\tau^2+h^2)\,\|\Delta_h\ee^{\half}\|_{0,h}
+2\,\tau\,|{\sf r}^{\quarter}_{\ssy C}|_{1,h}\,|\ee^{\half}|_{1,h}\\
\leq&\,C\,\tau\,(\tau^2+h^2)\,\|\Delta_h\ee^{\half}\|_{0,h}
+C\,\tau^2\,|\ee^{\half}|_{1,h}\\
\leq&\,C\,\left[\,\tau\,(\tau^2+h^2)^2+\tau^4\,\right]
+\tfrac{\tau}{6}\,\|\Delta_h\ee^{\half}\|_{0,h}^2+|\ee^{\half}|_{1,h}^2.\\
\end{split}
\end{equation}
In view of \eqref{BR_reco_2}, \eqref{BR_reco_3} and \eqref{BR_reco_4}, we arrive at
\begin{equation}\label{BR_reco_5}
|\ee^{\half}|_{1,h}^2+\tau\,\|\Delta_h\ee^{\half}\|_{0,h}^2
\leq\,C\,(\tau^2+\tau^{\half}\,h^2)^2,
\end{equation}
which, obviously, yields \eqref{BR_CB_cnv_3}.
\par
Since $\ddelta\ge u_{\star}$, using \eqref{ni_defin}, \eqref{BR_CB3}
and \eqref{BR_reco_1}, we have
\begin{equation}\label{BR_reco_6}
\begin{split}
\|\btheta^0\|_{0,h}^2=&\,\big\|g(\gff_{\ddelta}(u^{\half}))
-g\big(\gff_{\ddelta}\big(V^{\half}_{\ddelta}\big)\big)\big\|_{0,h}^2\\
\leq&\,\sup_{\ssy\rset}|(g\circ\gff_{\ssy\ddelta})'|^2\,\|\ee^{\half}\|_{0,h}^2\\
\leq&\,C_{\ddelta}\,(\tau^2+\tau\,h^2)^2.\\
\end{split}
\end{equation}
Also, using Lemma~\ref{Lemma_D2}, \eqref{dpoincare} and \eqref{BR_reco_5}, we get
\begin{equation}\label{BR_reco_7}
\begin{split}
|\btheta^0|_{1,h}^2=&\,\big|g(\gff_{\ddelta}(u^{\half}))
-g\big(\gff_{\ddelta}\big(V^{\half}_{\ddelta}\big)\big)\big|_{1,h}^2\\
\leq&\,2\,\sup_{\ssy\rset}|(g\circ\gff_{\ssy\ddelta})'|^2\,|\ee^{\half}|_{1,h}^2
+2\,\sup_{\ssy\rset}|(g\circ\gff_{\ssy\ddelta})''|^2\,|\!|\!|\delta_hg(u^{\half})|\!|\!|_{\infty,h}^2
\,\|\ee^{\half}\|_{0,h}^2\\
\leq&\,C_{\ddelta}\,|\ee^{\half}|_{1,h}^2\\
\leq&\,C_{\ddelta}\,(\tau^2+\tau^{\half}\,h^2)^2.\\
\end{split}
\end{equation}
%
%
%
\par\noindent\vskip0.4truecm\par\noindent
$\boxed{{\tt Part\,\,\,2}:}$
We subtract \eqref{BR_CB4} and \eqref{BR_CB6} from \eqref{NORAD_12},
to obtain the following error equations:
\begin{equation}\label{BR_CB_EQ}
2\,(\ee^{n+1}-\ee^n)=\tau\,\Delta_h\left(\ee^{n+1}+\ee^n\right)
+\sum_{\kappa=1}^3{\sf Q}^{\kappa,n},\quad n=0,\dots,N-1,
\end{equation}
where
\begin{equation*}
\begin{split}
{\sf Q}^{1,n}:=&\,2\,\tau\,{\sf s}^{n+\frac{1}{2}},\\
{\sf Q}^{2,n}:=&\,\tau\,\gff_{\ddelta}
\big(\Phi_{\ddelta}^{n+\half}\big)
\otimes\left(\ee^{n+1}+\ee^{n}\right),\\
{\sf Q}^{3,n}:=&\,\tau\,\left[\,g(\uu^{n+\half})
-\gff_{\ddelta}\big(\Phi_{\ddelta}^{n+\half}\big)
\,\right]\otimes\left(\uu^{n+1}+\uu^{n}\right).\\
\end{split}
\end{equation*}
We take the inner product $(\cdot,\cdot)_{0,h}$
of \eqref{BR_CB_EQ} with $(\ee^{n+1}-\ee^{n})$,
and then, use \eqref{NewEra1}, to have
\begin{equation}\label{BR_CB_Gat2}
2\,\|\ee^{n+1}-\ee^{n}\|_{0,h}^2
+\tau\,\left[\,|\ee^{n+1}|_{1,h}^2
-|\ee^n|_{1,h}^2\,\right]
=\sum_{\kappa=1}^3{\sf q}^{\kappa,n},
\quad n=0,\dots,N-1,
\end{equation}
where
\begin{equation*}
{\sf q}^{\kappa,n}:=({\sf Q}^{\kappa,n},\ee^{n+1}-\ee^{n})_{0,h}.
\end{equation*}
\par
Let $n\in\{0\dots,N-1\}$.  Using the Cauchy-Schwarz ine\-quality,
the arithmetic mean ine\-quality, \eqref{SHELL_01a}
and \eqref{SHELL_11}, we have
\begin{equation}\label{BR_CB_Gat3}
\begin{split}
{\sf q}^{1,n}\leq&\,2\,\tau\,\left[\,\|{\sf s}^{n+\half}-{\sf r}^{n+\half}\|_{0,h}
+\|{\sf r}^{n+\half}\|_{0,h}\,\right]\,
\|\ee^{n+1}-\ee^{n}\|_{0,h}\\
\leq&\,2\,\tau\,(\tau^2+h^2)\,
\|\ee^{n+1}-\ee^{n}\|_{0,h}\\
\leq&\,C\,\tau^2\,(\tau^2+h^2)^2\,
+\tfrac{1}{6}\,\|\ee^{n+1}-\ee^{n}\|_{0,h}^2.
\end{split}
\end{equation}
Next, we use the Cauchy-Schwarz inequality,  \eqref{dpoincare},
\eqref{ni_defin} and the arithmetic mean inequality, to get
\begin{equation}\label{BR_CB_Gat4}
\begin{split}
{\sf q}^{2,n}\leq&\,\tau
\,|\gff_{\ddelta}(\Phi_{\ddelta}^{n+\half})|_{\infty,h}
\,\|\ee^{n+1}+\ee^{n}\|_{0,h}
\,\|\ee^{n+1}-\ee^{n}\|_{0,h}\\
\leq&\,C_{\ssy\ddelta}\,\tau\,
\,|\ee^{n+1}+\ee^{n}|_{1,h}
\,\|\ee^{n+1}-\ee^{n}\|_{0,h}\\
\leq&\,C_{\ssy\ddelta}\,\tau^2
\,\left[\,|\ee^{n+1}|_{1,h}^2+|\ee^{n}|_{1,h}^2\,\right]
+\tfrac{1}{6}\,\|\ee^{n+1}-\ee^{n}\|_{0,h}^2.\\
\end{split}
\end{equation}
Finally, taking into account that $\ddelta\ge g_{\star}$, we apply
the Cauchy-Schwarz inequality, \eqref{ni_defin}
and the arithmetic mean inequality to obtain
\begin{equation}\label{BR_CB_Gat5}
\begin{split}
{\sf q}^{3,n}\leq&\,2\,\tau\,\uu_{\star}\,
\|\gff_{\ddelta}(g(\uu^{n+\half}))-\gff_{\ddelta}(\Phi_{\ddelta}^{n+\half})\|_{0,h}
\,\|\ee^{n+1}-\ee^n\|_{0,h}\\
\leq&\,C\,\tau\,\max_{\ssy\rset}|\gff_{\ddelta}'|
\,\big\|g(\uu^{n+\half})-\Phi_{\ddelta}^{n+\half}\big\|_{0,h}
\,\|\ee^{n+1}-\ee^{n}\|_{0,h}\\
\leq&\,C_{\ddelta}\,\tau\,\|\btheta^{n}\|_{0,h}
\,\|\ee^{n+1}-\ee^{n}\|_{0,h}\\
\leq&\,C_{\ddelta}\,\tau^2\,\|\btheta^{n}\|_{0,h}^2
+\tfrac{1}{6}\,\|\ee^{n+1}-\ee^{n}\|_{0,h}^2.\\
\end{split}
\end{equation}
\par
From \eqref{BR_CB_Gat2}, \eqref{BR_CB_Gat3}, \eqref{BR_CB_Gat4}
and \eqref{BR_CB_Gat5}, we conclude that there exists a
constant ${\sf C}_{\ddelta}^{\ssy{\sf BR, III}}>0$, such that
\begin{equation}\label{BR_CB_Gat5.1}
\begin{split}
\|\ee^{n+1}-\ee^{n}\|_{0,h}^2
+\tau\,|\ee^{n+1}|_{1,h}^2\leq&\,\tau\,|\ee^n|_{1,h}^2
+{\sf C}_{\ddelta}^{\ssy{\sf BR, III}}\,\tau^2
\,\left[\,|\ee^{n+1}|_{1,h}^2+|\ee^{n}|_{1,h}^2
+\|\btheta^{n}\|_{0,h}^2\,\right]\\
&\,+C\,\tau^2\,(\tau^2+h^2)^2,\quad n=0,\dots,N-1.\\
\end{split}
\end{equation}
Let us find an error equation governing the midpoint error $\|\btheta^n\|_{0,h}$.
Subtracting \eqref{BR_CB5} from \eqref{NORAD_22} and using \eqref{ni_defin}
and the assumption $\ddelta\ge u_{\star}$, we obtain
\begin{equation}\label{BR_CB_Gat7.1}
\btheta^{n}+\btheta^{n-1}
=2\,\left[g(\gff_{\ddelta}(u^n))-g(\gff_{\ddelta}(V_{\ddelta}^n))\right]
+2\,{\sf r}^{n},\quad n=1,\dots,N-1,
\end{equation}
which, easily, yields that
\begin{equation}\label{BR_CB_Gat7}
\btheta^{n}-\btheta^{n-2}
=2\,{\sf R}^n+2\,({\sf r}^n-{\sf r}^{n-1}),\quad n=2,\dots,N-1,
\end{equation}
where ${\sf R}^n\in\dspace$ is defined by
\begin{equation}\label{BR_caracol_1}
{\sf R}^n:=g(\gff_{\ddelta}(V_{\ddelta}^{n-1}))-g(\gff_{\ddelta}(V_{\ddelta}^n))
-g(\gff_{\ddelta}(u^{n-1}))+g(\gff_{\ddelta}(u^n)).
\end{equation}
Then, we use \eqref{OPAP_1A}, \eqref{ni_defin} and the mean value theorem, to get
\begin{equation}\label{BR_RR_1}
\begin{split}
\|{\sf R}^n\|_{0,h}\leq&\,\sup_{\ssy\rset}|(g\circ\gff_{\ssy\ddelta})'|
\,\|\ee^n-\ee^{n-1}\|_{0,h}\\
&\quad+\sup_{\ssy\rset}|(g\circ\gff_{\ssy\ddelta})''|\,|\uu^{n-1}-\uu^{n}|_{\infty,h}
\,\left[\|\ee^n-\ee^{n-1}\|_{0,h}+\|\ee^{n}\|_{0,h}\right]\\
\leq&\,C_{\ddelta}\,\left[\,\|\ee^n-\ee^{n-1}\|_{0,h}
+\tau\,\|\ee^{n}\|_{0,h}\,\right],
\quad n=2,\dots,N-1.
\end{split}
\end{equation}
Taking the $(\cdot,\cdot)_{0,h}$ inner product of both sides of \eqref{BR_CB_Gat7}
with $\tau\big(\btheta^{n}+\btheta^{n-2}\big)$, and then using the
Cauchy-Schwarz inequality, \eqref{BR_RR_1}, \eqref{SHELL_22}
and \eqref{dpoincare}, it follows that
\begin{equation*}
\begin{split}
\tau\,\|\btheta^{n}\|_{0,h}^2-\tau\,\|\btheta^{n-2}\|^2_{0,h}\leq&\,
\left[\,2\,\tau\,\|{\sf R}^n\|_{0,h}
+2\,\tau\,\|{\sf r}^n-{\sf r}^{n-1}\|_{0,h}\,\right]
\,\|\btheta^{n}+\btheta^{n-2}\|_{0,h}\\
\leq&\,C_{\ddelta}\,\left[\,\tau\,\|\ee^n-\ee^{n-1}\|_{0,h}
+\tau^2\,\|\ee^{n}\|_{0,h}\,\right]\,\|\btheta^{n}+\btheta^{n-2}\|_{0,h}\\
&\quad+C\,\tau^4\,\|\btheta^{n}+\btheta^{n-2}\|_{0,h}\\
\leq&\,C_{\ddelta}\,\tau\,\|\ee^n-\ee^{n-1}\|_{0,h}
\,\|\btheta^n+\btheta^{n-2}\|_{0,h}\\
&\quad+C_{\ddelta}\,\tau^2\,|\ee^{n}|_{1,h}
\,\|\btheta^{n}+\btheta^{n-2}\|_{0,h}\\
&\quad+C\,\tau^4\,\|\btheta^{n}+\btheta^{n-2}\|_{0,h},
\quad n=2,\dots,N-1,\\
\end{split}
\end{equation*}
which, along with the application of the arithmetic mean inequality, yields
\begin{equation}\label{BR_CB_Gat8}
\begin{split}
\tau\,\|\btheta^{n}\|_{0,h}^2+\tau\,\|\btheta^{n-1}\|_{0,h}^2
\leq&\,\tau\,\|\btheta^{n-1}\|_{0,h}^2+\tau\,\|\btheta^{n-2}\|^2_{0,h}
+\|\ee^n-\ee^{n-1}\|_{0,h}^2+C\,\tau^6\\
&+C_{\ddelta}\,\tau^2\,\left[\,|\ee^{n}|_{1,h}^2
+\|\btheta^{n}\|_{0,h}^2+\|\btheta^{n-2}\|_{0,h}^2\,\right],
\quad n=2,\dots,N-1.\\
\end{split}
\end{equation}
\par
Thus, from \eqref{BR_CB_Gat5.1} and \eqref{BR_CB_Gat8}, we
conclude that there exists a constant ${\sf C}_{\ddelta}^{\ssy{\sf BR, IV}}>0$
such that:
\begin{equation}\label{BR_CB_Gat9}
(1-{\sf C}_{\ddelta}^{\ssy{\sf BR, IV}}\,\tau)\,{\sf Z}^{n+1}
\leq\,(1+{\sf C}_{\ddelta}^{\ssy{\sf BR, IV}}\,\tau)\,{\sf Z}^n
+C\,\tau^2\,(\tau^2+h^2)^2,\quad n=2,\dots,N-1,
\end{equation}
where
\begin{equation}\label{BR_Zerror}
{\sf Z}^m:=\|\ee^{m}-\ee^{m-1}\|_{0,h}^2
+\tau\,\left[\,|\ee^m|_{1,h}^2
+\|\btheta^{m-1}\|_{0,h}^2
+\|\btheta^{m-2}\|_{0,h}^2\,\right],
\quad n=2,\dots,N.
\end{equation}
Assuming that $\tau\,{\sf C}^{\ssy{\sf BR, V}}_{\ddelta}\leq\frac{1}{2}$ with
${\sf C}^{\ssy{\sf BR, V}}_{\ddelta}:=\max\{{\sf C}_{\delta}^{\ssy{\sf BR, III}},
{\sf C}_{\ddelta}^{\ssy{\sf BR, IV}}\}$,
a standard discrete Gronwall argument based on \eqref{BR_CB_Gat9} yields
\begin{equation}\label{BR_CB_Gat10}
\begin{split}
\max_{2\leq{m}\leq{\ssy N}}{\sf Z}^m
\leq&\,C_{\ddelta}\,\left[\,{\sf Z}^2+\tau\,(\tau^2+h^2)^2\,\right]\\
\leq&\,C_{\ddelta}\,\left[
\|\ee^{2}-\ee^{1}\|_{0,h}^2
+\tau\,|\ee^2|_{1,h}^2
+\tau\,\|\btheta^{1}\|_{0,h}^2
+\tau\,\|\btheta^{0}\|_{0,h}^2
+\tau\,(\tau^2+h^2)^2\,\right].\\
\end{split}
\end{equation}
Since $\ee^0=0$, after setting $n=0$ in \eqref{BR_CB_Gat5.1}
and then using \eqref{BR_reco_6}, we obtain
\begin{equation}\label{BR_reco_what_1}
\begin{split}
\|\ee^{1}\|_{0,h}^2+\tau\,|\ee^{1}|_{1,h}^2
\leq&\,C_{\ddelta}\,\left[\,\tau^2\,(\tau^2+h^2)^2
+\tau^2\,\|\btheta^0\|^2_{0,h}\,\right]\\
\leq&\,C_{\ddelta}\,\tau^2\,(\tau^2+h^2)^2.\\
\end{split}
\end{equation}
Also, setting $n=1$ in \eqref{BR_CB_Gat7.1}
and then using \eqref{BR_CB1}, we get
\begin{equation}\label{BR_Papagalini}
\btheta^1=-\btheta^0+2\,{\sf r}^1,
\end{equation}
which, along with \eqref{BR_reco_6}
and \eqref{SHELL_21}, yields
\begin{equation}\label{BR_reco_6.6.1}
\|\btheta^1\|_{0,h}^2\leq\,C_{\ddelta}\,(\tau^2+\tau\,h^2)^2.
\end{equation}
Also, setting $n=1$ in \eqref{BR_CB_Gat5.1}, and then
using \eqref{BR_reco_what_1} and \eqref{BR_reco_6.6.1},
we have
\begin{equation}\label{BR_reco_8}
\begin{split}
\|\ee^2-\ee^1\|_{0,h}^2+\,\tau\,|\ee^{2}|_{1,h}^2
\leq&\,C\,\tau^2\,(\tau^2+h^2)^2
+C_{\ddelta}\,\left[\,\tau\,|\ee^{1}|_{1,h}^2
+\tau^2\,\|\btheta^1\|_{0,h}^2\,\right]\\
\leq&\,C_{\ddelta}\,\tau^2\,(\tau^2+h^2)^2.\\
\end{split}
\end{equation}
Thus, \eqref{BR_CB_Gat10}, \eqref{BR_reco_8},
\eqref{BR_reco_6.6.1} and \eqref{BR_reco_6}
yield
\begin{equation}\label{BR_main_res_2}
\max_{2\leq{m}\leq{\ssy N}}{\sf Z}^m\leq\,C_{\ddelta}\,\tau\,(\tau^2+h^2)^2.
\end{equation}
Since $\ee^0=0$, \eqref{BR_CB_cnv_1}  follows, easily,
from \eqref{BR_Zerror}, \eqref{BR_main_res_2} and \eqref{BR_reco_what_1}.
%
%
%
%
\par\noindent\vskip0.4truecm\par\noindent
$\boxed{{\tt Part\,\,\,3}:}$
Let us define $\brho^m:={\sf R}_h[u(t^m,\cdot)]-\uu^m\in\dspace$ and
$\bhta^m:=V_{\ddelta}^m-{\sf R}_h[u(t^m,\cdot)]\in\dspace$ for $m=0,\dots,N$.
Then, using \eqref{BR_CB4}, \eqref{BR_CB6}, \eqref{NORAD_02}
and \eqref{ELP_1} we get
\begin{equation}\label{BR_NES_1}
2\,(\bhta^{n+1}-\bhta^n)=\tau\,\Delta_h\left(\bhta^{n+1}+\bhta^n\right)
+\sum_{\kappa=1}^{4}{\sf B}^{\kappa,n},\quad n=0,\dots,N-1,
\end{equation}
where
\begin{equation*}
\begin{split}
{\sf B}^{1,n}:=&\,2\,\tau\,\left(\,\tfrac{\uu^{n+1}-\uu^n}{\tau}
-{\sf R}_h\left[\tfrac{\uu(t^{n+1},\cdot)-\uu(t^n,\cdot)}{\tau}\right]\,\right),\\
{\sf B}^{2,n}:=&\,-2\,\tau\,{\sf r}^{n+\half},\\
{\sf B}^{3,n}:=&\,-\tau\,\gff_{\ddelta}\big(\Phi_{\ddelta}^{n+\half}\big)
\otimes(\ee^{n+1}+\ee^{n}),\\
{\sf B}^{4,n}:=&\,\tau\,\left[\,\gff_{\ddelta}\big(\Phi_{\ddelta}^{n+\half}\big)
-\gff_{\ddelta}(g(\uu^{n+\half}))\,\right]
\otimes(\uu^{n+1}+\uu^{n}).\\
\end{split}
\end{equation*}
Take the $(\cdot,\cdot)_{0,h}-$inner product of \eqref{BR_NES_1} with
$\Delta_h(\bhta^{n+1}-\bhta^{n})$, and then, use
\eqref{NewEra2} and \eqref{NewEra1}, to have
\begin{equation}\label{BR_NES_2}
2\,|\bhta^{n+1}-\bhta^n|_{1,h}^2
+\tau\left[\,\|\Delta_h\bhta^{n+1}\|_{0,h}^2
-\|\Delta_h\bhta^{n}\|_{0,h}^2\,\right]
=\sum_{\kappa=1}^4{\sf b}^{\kappa,n},\quad n=0,\dots,N-1,
\end{equation}
where
\begin{equation*}
{\sf b}^{\kappa,n}:=(\!(\delta_h{\sf B}^{\kappa,n},\delta_h(\bhta^{n+1}-\bhta^{n}))\!)_{0,h}.
\end{equation*}
\par
Let $n\in\{0\dots,N-1\}$.  Using the Cauchy-Schwarz inequality,
the arithmetic mean inequality,
\eqref{SHELL_02a} and \eqref{ELP_6}, we have
\begin{equation}\label{BR_NES_4}
\begin{split}
{\sf b}^{1,n}\leq&\,|{\sf B}^{1,n}|_{1,h}
\,|\bhta^{n+1}-\bhta^{n}|_{1,h}\\
\leq&\, C\,\tau\,h^2\,|\bhta^{n+1}-\bhta^{n}|_{1,h}\\
\leq&\, C\,\tau^2\,h^4+\tfrac{1}{6}\,|\bhta^{n+1}-\bhta^{n}|_{1,h}^2\\
\end{split}
\end{equation}
and
\begin{equation}\label{BR_NES_3}
\begin{split}
{\sf b}^{2,n}\leq&\,2\,\tau\,|{\sf r}^{n+\half}|_{1,h}\,
|\bhta^{n+1}-\bhta^{n}|_{1,h}\\
\leq&\,C\,\tau^3\,|\bhta^{n+1}-\bhta^{n}|_{1,h}\\
\leq&\,C\,\tau^6\,
+\tfrac{1}{6}\,|\bhta^{n+1}-\bhta^{n}|_{1,h}^2.\\
\end{split}
\end{equation}
Using, again, the Cauchy-Schwarz inequality
and the arithmetic mean inequality, we get
\begin{equation}\label{BR_NES_5}
{\sf b}^{3,n}+{\sf b}^{4,n}
\leq\tfrac{3}{2}\,\tau^2\,\left(\,|{\sf c}^{3,n}|_{1,h}^2+|{\sf c}^{4,n}|_{1,h}^2\,\right)
+\tfrac{2}{6}\,|\bhta^{n+1}-\bhta^{n}|_{1,h}^2
\end{equation}
where
\begin{equation*}
\begin{split}
{\sf c}^{3,n}:=&\,\gff_{\ddelta}(\Phi_{\ddelta}^{n+\half})\otimes(\ee^{n+1}+\ee^{n}),\\
{\sf c}^{4,n}:=&\,(g(\uu^{n+\half})-\gff_{\ddelta}(\Phi_{\ddelta}^{n+\half}))\otimes(\uu^{n+1}+\uu^{n}).\\
\end{split}
\end{equation*}
Then, we use \eqref{ni_defin}, \eqref{LH1},
\eqref{BR_CB_cnv_1}, \eqref{dpoincare}, \eqref{Gprop0} and the assumption
$\ddelta>g_{\star}$ to get
\begin{equation}\label{BR_NES_7}
\begin{split}
|{\sf c}^{3,n}|_{1,h}\leq&\,
\sup_{\ssy\rset}|\gff'_{\ddelta}|\,\big|\Phi_{\ddelta}^{n+\half}\big|_{1,h}
\,|\ee^{n+1}+\ee^{n}|_{\infty,h}
+\,\sup_{\ssy\rset}|\gff_{\ddelta}|
\,|\ee^{n+1}+\ee^n|_{1,h}\\
\leq&\,C_{\ddelta}\,\left[\,1+\big|\Phi_{\ddelta}^{n+\half}-g(u^{n+\half})\big|_{1,h}
+|g(u^{n+\half})|_{1,h}\,\right]
\,(|\ee^{n+1}|_{1,h}+|\ee^{n}|_{1,h})\\
\leq&\,C_{\ddelta}\,\left[\,1+\big|\Phi_{\ddelta}^{n+\half}-g(u^{n+\half})\big|_{1,h}\,\right]
\,(\tau^2+h^2)\\
\leq&\,C_{\ddelta}\,\left[\,|\btheta^n|_{1,h}+(\tau^2+h^2)\,\right]\\
\end{split}
\end{equation}
and
\begin{equation}\label{BR_NES_8}
\begin{split}
|{\sf c}^{4,n}|_{1,h}
\leq&\,C\,\left[\,\|\gff_{\ddelta}(g(\uu^{n+\half}))
-\gff_{\ddelta}(\Phi_{\ddelta}^{n+\half})\|_{0,h}
+\big|\gff_{\ddelta}(g(\uu^{n+\half}))
-\gff_{\ddelta}(\Phi_{\ddelta}^{n+\half})\big|_{1,h}\,\right]\\
\leq&\,C\,\big|\gff_{\ddelta}(\Phi_{\ddelta}^{n+\half})
-\gff_{\ddelta}(g(\uu^{n+\half}))\big|_{1,h}\\
\leq&\,C\,\left[\,\sup_{\ssy\rset}|\gff'_{\ddelta}|\,|\btheta^n|_{1,h}
+\max_{\ssy\rset}|\gff''_{\ddelta}|\,|\!|\!|\delta_h(g(u^{n+\half}))|\!|\!|_{\infty,h}
\,\|\btheta^n\|_{0,h}\,\right]\\
\leq&\,C_{\ddelta}\,\left[\,|\btheta^n|_{1,h}+(\tau^2+h^2)\,\right].\\
\end{split}
\end{equation}
Thus, \eqref{BR_NES_5}, \eqref{BR_NES_7} and \eqref{BR_NES_8} yield
\begin{equation}\label{BR_NES_88}
{\sf b}^{3,n}+{\sf b}^{4,n}
\leq C_{\ddelta}\,\tau^2\,\left[\,|\btheta^n|_{1,h}^2+(\tau^2+h^2)^2\,\right]
+\tfrac{2}{6}\,|\bhta^{n+1}-\bhta^{n}|_{1,h}^2.
\end{equation}
\par
From \eqref{BR_NES_2}, \eqref{BR_NES_4}, \eqref{BR_NES_3}
and \eqref{BR_NES_88}, we conclude that there
exists a constant ${\sf C}_{\ddelta}^{\ssy{\sf BR, VI}}>0$, such that
\begin{equation}\label{BR_NES_9}
\begin{split}
|\bhta^{n+1}-\bhta^{n}|_{1,h}^2
+\tau\,\|\Delta_h\bhta^{n+1}\|_{0,h}^2\leq&\,\tau\,\|\Delta_h\bhta^n\|_{0,h}^2
+{\sf C}_{\ddelta}^{\ssy{\sf BR, VI}}\,\tau^2\,|\btheta^{n}|_{1,h}^2\\
&\quad+{\sf C}_{\ddelta}^{\ssy{\sf BR, VI}}\,
\tau^2\,(\tau^2+h^2)^2,\quad n=0,\dots,N-1.\\
\end{split}
\end{equation}
Taking the $(\cdot,\cdot)_{0,h}$ inner product of both sides of \eqref{BR_CB_Gat7} by
$\tau\,\Delta_h(\btheta^{n}+\btheta^{n-2})$, and using \eqref{NewEra1},
the Cauchy-Schwarz inequality and \eqref{SHELL_22}, we have
\begin{equation}\label{BR_NES_10}
\begin{split}
\tau\,|\btheta^{n}|_{1,h}^2-\tau\,|\btheta^{n-2}|^2_{1,h}
=&\,2\,\tau\,(\!\!(\delta_h{\sf R}^n,
\delta_h(\btheta^{n}+\btheta^{n-2}))\!\!)_{0,h}
+2\,\tau\,(\!\!(\delta_h({\sf r}^n-{\sf r}^{n-1}),
\delta_h(\btheta^{n}+\btheta^{n-2}))\!\!)_{0,h}\\
\leq&\,2\,\tau\,\left(\,|{\sf R}^n|_{1,h}+|{\sf r}^n-{\sf r}^{n-1}|_{1,h}\right)
\,(|\btheta^{n}|_{1,h}+|\btheta^{n-2}|_{1,h})\\
\leq&\,2\,\tau\,\left(\,|{\sf R}^n|_{1,h}+\tau^3\right)
\,(|\btheta^{n}|_{1,h}+|\btheta^{n-2}|_{1,h}),
\quad n=2,\dots,N-1.\\
\end{split}
\end{equation}
%
%
Using \eqref{BR_caracol_1}, \eqref{OPAP_1B}, \eqref{dpoincare},
\eqref{BR_CB_cnv_1} and \eqref{ELP_6}, we get
\begin{equation}\label{BR_NES_11b}
\begin{split}
|{\sf R}^n|_{1,h}\leq&\,C_{\ddelta}\,\left[\,|\ee^n-\ee^{n-1}|_{1,h}+\tau\,(\tau^2+h^2)\,\right]\\
\leq&\,C_{\ddelta}\,\left[\,|\bhta^n-\bhta^{n-1}|_{1,h}
+|\brho^n-\brho^{n-1}|_{1,h}+\tau\,(\tau^2+h^2)\,\right]\\
\leq&\,C_{\ddelta}\,\left[\,|\bhta^n-\bhta^{n-1}|_{1,h}+\tau\,(\tau^2+h^2)\,\right],
\quad n=2,\dots,N-1.
\end{split}
\end{equation}
Then, \eqref{BR_NES_10}, \eqref{BR_NES_11b} and the arithmetic mean inequality, yield
\begin{equation}\label{BR_NES_12b}
\begin{split}
\tau\,|\btheta^{n}|_{1,h}^2+\tau\,|\btheta^{n-1}|_{1,h}^2\leq&\,\tau\,|\btheta^{n-1}|_{1,h}^2
+\tau\,|\btheta^{n-2}|^2_{1,h}\\
&\quad+C_{\ddelta}\,\tau\,
\left[\,|\bhta^n-\bhta^{n-1}|_{1,h}+\tau\,(\tau^2+h^2)\right]
\,(|\btheta^{n}|_{1,h}+|\btheta^{n-2}|_{1,h})\\
\leq&\,\tau\,|\btheta^{n-1}|_{1,h}^2+\tau\,|\btheta^{n-2}|^2_{1,h}+|\bhta^n-\bhta^{n-1}|_{1,h}^2\\
&\quad+C_{\ddelta}\,\left[\,\tau^2\,
\,(|\btheta^{n}|_{1,h}^2+|\btheta^{n-2}|_{1,h}^2)+\tau^2\,(\tau^2+h^2)^2\,\right],
\quad n=2,\dots,N-1.\\
\end{split}
\end{equation}
\par
Combining \eqref{BR_NES_9} and \eqref{BR_NES_12b}, we conclude that there exists
a positive constant ${\sf C}_{\ddelta}^{\ssy{\sf BR, VII}}$ such that:
\begin{equation}\label{BR_NES_11}
(1-{\sf C}_{\ddelta}^{\ssy{\sf BR, VII}}\,\tau)\,{\sf Z}_{\star}^{n+1}
\leq\,(1+{\sf C}_{\ddelta}^{\ssy{\sf BR, VII}}\,\tau)\,{\sf Z}_{\star}^{n}
+C_{\ddelta}\,\tau^2\,(\tau^2+h^2)^2,\quad n=2,\dots,N-1,
\end{equation}
where
\begin{equation}\label{BR_NES_10_a}
{\sf Z}_{\star}^m:=|\bhta^m-\bhta^{m-1}|^2_{1,h}+\tau\,\|\Delta_h\bhta^m\|_{0,h}^2
+\tau\,|\btheta^{m-1}|_{1,h}^2+\tau\,|\btheta^{m-2}|_{1,h}^2,\quad m=2,\dots,N.
\end{equation}
Assuming that $\tau\,{\sf C}^{\ssy{\sf BR, VIII}}_{\ddelta}\leq\frac{1}{2}$, where
${\sf C}^{\ssy{\sf BR, VIII}}_{\ddelta}:=\max\{{\sf C}_{\ddelta}^{\ssy{\sf BR, VII}},
{\sf C}_{\ddelta}^{\ssy{\sf BR, VI}}\}$, and using a standard discrete Gronwall argument
based on \eqref{BR_NES_11}, we obtain
\begin{equation}\label{BR_NES_12}
\begin{split}
\max_{2\leq{m}\leq{\ssy N}}{\sf Z}_{\star}^m\leq&\,C_{\ddelta}
\,\left[\,{\sf Z}_{\star}^2+\tau\,(\tau^2+h^2)^2\,\right]\\
\leq&\,C_{\ddelta}
\,\left[\,|\bhta^2-\bhta^1|_{1,h}^2+\tau\,\|\Delta_h\bhta^2\|_{0,h}^2+\tau\,|\btheta^1|_{1,h}^2
+\tau\,|\btheta^0|_{1,h}^2+\tau\,(\tau^2+h^2)^2\,\right].\\
\end{split}
\end{equation}
After setting $n=0$ in \eqref{BR_NES_9} and then
using  \eqref{BR_reco_7} and \eqref{ELP_4}, we obtain
\begin{equation}\label{BR_NES_13}
\begin{split}
|\bhta^{1}-\bhta^0|_{1,h}^2+\tau\,\|\Delta_h\bhta^1\|_{0,h}^2
\leq&\,\tau\,\|\Delta_h\bhta^0\|_{0,h}^2+C_{\ddelta}\,\left[\,
\tau^2\,|\btheta^0|_{1,h}^2
+\tau^2\,(\tau^2+h^2)^2\,\right]\\
\leq&\,C\,\tau\,h^4+C_{\ddelta}\,\tau^2\,(\tau^2+h^2)^2\\
\leq&\,C_{\ddelta}\,\tau\,(\tau^2+h^2)^2.\\
\end{split}
\end{equation}
Using \eqref{BR_Papagalini}, \eqref{BR_reco_7} and \eqref{SHELL_21},
we have
%
\begin{equation}\label{BR_NES_14}
\begin{split}
|\btheta^1|_{1,h}^2
\leq&\,C_{\ddelta}\,(\tau^2+h^2)^2.\\
\end{split}
\end{equation}
Set  $n=1$ in \eqref{BR_NES_9} to conclude that
\begin{equation*}
|\bhta^2-\bhta^1|_{1,h}^2+\tau\,\|\Delta_h\bhta^{2}\|_{0,h}^2
\leq\tau\,\|\Delta_h\bhta^{1}\|_{0,h}^2
+C_{\ddelta}\,\left[\,\tau^2\,(\tau^2+h^2)^2
+\tau^2\,|\btheta^1|_{1,h}^2\,\right]
\end{equation*}
which, along with, \eqref{BR_NES_13} and \eqref{BR_NES_14}, yields
\begin{equation}\label{BR_NES_15}
|\bhta^2-\bhta^1|_{1,h}^2
+\tau\,\|\Delta_h\bhta^{2}\|_{0,h}^2\leq\,C_{\ddelta}
\,\tau\,(\tau^2+h^2)^2.
\end{equation}
Thus, from \eqref{BR_NES_12}, \eqref{BR_NES_15}, \eqref{BR_NES_14}
and \eqref{BR_reco_7}, we obtain
\begin{equation}\label{BR_NES_16}
\max_{2\leq{m}\leq{\ssy N}}{\sf Z}_{\star}^m\leq\,C_{\ddelta}\,\tau\,(\tau^2+h^2)^2.
\end{equation}
Finally, \eqref{BR_CB_cnv_2} follows, easily, from \eqref{BR_NES_10_a}
and \eqref{BR_NES_16}.
\end{proof}
%
%
%
\subsection{Convergence of the (BRFD) method}
%
%
%
%
\begin{thm}\label{DR_Final}
Let $u_{\star}:=\max\limits_{\ssy{[0,T]\times{\mathcal I}}}|\uu|$,
$g_{\star}:=\max\limits_{\ssy{[0,T]\times{\mathcal I}}}|g(\uu)|$,
$\ddelta\ge\,2\,\max\{u_{\star},g_{\star}\}$, ${\sf C}^{\ssy{\sf BR,I}}_{\ddelta}$
be the constant determined in Proposition~\ref{BR_Exist}, ${\sf C}^{\ssy{\sf BCV},1}_{\ddelta}$,
${\sf C}^{\ssy{\sf BCV},2}_{\ddelta}$, ${\sf C}_{\ddelta}^{\ssy{\sf BCV},3}$ and
${\sf C}_{\ddelta}^{\ssy{\sf BCV},4}$ be the constants
specified in Theorem~\ref{BR_CB_Conv}, where
${\sf C}^{\ssy{\sf BCV},1}_{\ddelta}\ge{\sf C}^{\ssy{\sf BR, I}}_{\ddelta}$.
If
\begin{equation}\label{BR_Xmesh}
\tau\,{\sf C}^{\ssy{\sf BCV},1}_{\ddelta}\leq\tfrac{1}{2},
\quad
{\sf C}_{\ddelta}^{\ssy{\sf BCV},2}\,\sqrt{{\sf L}}\,(\tau^2+\tau^{\half}\,h^{2})
\leq\,\tfrac{\ddelta}{2},
\quad
{\sf C}_{\ddelta}^{\ssy{\sf BCV},4}\,\sqrt{{\sf L}}\,(\tau^2+h^{2})
\leq\,\tfrac{\ddelta}{2},
\end{equation}
then, the method (BRFD) is well-defined and the following error estimates hold
\begin{equation}\label{BR_true_2}
|\uu^{\half}-\UUU^{\half}|_{1,h}
\leq\,{\sf C}_{\ddelta}^{\ssy{\sf BCV},2}\,(\tau^2+\tau^{\half}\,h^2)
\end{equation}
and
\begin{equation}\label{BR_true_1}
\max_{0\leq{m}\leq{\ssy N-1}}|g(\uu^{m+\half})-\Phi^{m+\half}|_{1,h}
+\max_{0\leq{m}\leq{\ssy N}}|\uu^m-\UUU^m|_{1,h}
\leq\,\max\{{\sf C}_{\ddelta}^{\ssy{\sf BCV},3},
{\sf C}_{\ddelta}^{\ssy{\sf BCV},4}\}\,(\tau^2+h^2).
\end{equation}
\end{thm}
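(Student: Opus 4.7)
The plan is to show that, under the hypotheses of the theorem, the (MBRFD) iterates produced by Proposition~\ref{BR_Exist} and Theorem~\ref{BR_CB_Conv} are automatically also (BRFD) iterates, after which the estimates \eqref{BR_true_2}--\eqref{BR_true_1} reduce to those already established in \eqref{BR_CB_cnv_3}--\eqref{BR_CB_cnv_2}. The central mechanism is the defining property of the mollifier \eqref{ni_defin}: whenever $|z|_{\infty,h}\le\ddelta$, one has $\gff_{\ddelta}(z)=z$, so the (MBRFD) recursion collapses to (BRFD) on any iterate whose discrete $L^{\infty}$-norm stays controlled by $\ddelta$.

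First I would invoke Theorem~\ref{BR_CB_Conv}, whose hypotheses are covered by the first inequality of \eqref{BR_Xmesh} together with $\ddelta\ge 2\max\{u_{\star},g_{\star}\}\ge\max\{u_{\star},g_{\star}\}$; this produces $V_{\ddelta}^{\half}$, $(V_{\ddelta}^m)_{m=0}^{N}$ and $(\Phi_{\ddelta}^{m+\half})_{m=0}^{N-1}$ together with the three error bounds. I would then combine those $H^{1}$-type estimates with the discrete Sobolev inequality \eqref{LH1} and the triangle inequality: the second mesh restriction in \eqref{BR_Xmesh} and $\ddelta\ge 2u_{\star}$ give
\begin{equation*}
|V_{\ddelta}^{\half}|_{\infty,h}\le|\uu^{\half}|_{\infty,h}+\sqrt{{\sf L}}\,|\uu^{\half}-V_{\ddelta}^{\half}|_{1,h}
\le u_{\star}+{\sf C}_{\ddelta}^{\ssy{\sf BCV},2}\sqrt{{\sf L}}\,(\tau^{2}+\tau^{\half}h^{2})
\le \tfrac{\ddelta}{2}+\tfrac{\ddelta}{2}=\ddelta,
\end{equation*}
and an entirely parallel computation applied to $V_{\ddelta}^{m}$ via \eqref{BR_CB_cnv_1} (absorbing ${\sf C}^{\ssy{\sf BCV},3}_{\ddelta}$ into ${\sf C}^{\ssy{\sf BCV},4}_{\ddelta}$ if necessary) and to $\Phi_{\ddelta}^{m+\half}$ via \eqref{BR_CB_cnv_2} together with $\ddelta\ge 2g_{\star}$ and the third mesh restriction yields $|V_{\ddelta}^{m}|_{\infty,h}\le\ddelta$ for $m=0,\dots,N$ and $|\Phi_{\ddelta}^{m+\half}|_{\infty,h}\le\ddelta$ for $m=0,\dots,N-1$.

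With these $L^{\infty}$ bounds in hand, $\gff_{\ddelta}$ acts as the identity on $V_{\ddelta}^{\half}$, on each $V_{\ddelta}^{n}$ and on each $\Phi_{\ddelta}^{n+\half}$, so the (MBRFD) equations \eqref{BR_CB2}--\eqref{BR_CB6} become exactly the (BRFD) equations \eqref{BRS_12}--\eqref{BRS_4}; hence the (MBRFD) iterates solve the (BRFD) recursion. Uniqueness of the (BRFD) recursion is immediate by induction on $n$: at each step the update is a linear tridiagonal system whose associated operator satisfies a coercivity bound of the same form as \eqref{dilesi}, with $|\gff_{\ddelta}(\zeta)|_{\infty,h}$ there replaced either by $|g(\uu^{0})|_{\infty,h}\le g_{\star}$ or by $|\Phi^{n+\half}|_{\infty,h}\le\ddelta$ (the latter bound being inherited from the identification with $\Phi_{\ddelta}^{n+\half}$ already made at smaller indices), so the argument of Proposition~\ref{BR_Exist} applies verbatim under the mesh condition $\tau\,{\sf C}^{\ssy{\sf BCV},1}_{\ddelta}\le\tfrac{1}{2}$. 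It follows that $\UUU^{\half}=V_{\ddelta}^{\half}$, $\UUU^{m}=V_{\ddelta}^{m}$ and $\Phi^{m+\half}=\Phi_{\ddelta}^{m+\half}$, and \eqref{BR_true_2}--\eqref{BR_true_1} drop out. The main obstacle is precisely this $L^{\infty}$ bootstrap: the bounds that make the mollifier inert come from error estimates proved only for the modified scheme, and this is the whole reason Theorem~\ref{BR_CB_Conv} is formulated with a fixed $\ddelta$ independent of the iterates and why the mesh conditions \eqref{BR_Xmesh} must be carefully balanced against the factor of $2$ hidden in $\ddelta\ge 2\max\{u_{\star},g_{\star}\}$.
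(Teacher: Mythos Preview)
Your proposal is correct and follows the same strategy as the paper: bound the (MBRFD) iterates in $L^\infty$ via the $H^1$ error estimates of Theorem~\ref{BR_CB_Conv} and the Sobolev inequality~\eqref{LH1}, deduce that the mollifier is inert so that the (MBRFD) iterates satisfy the (BRFD) recursion, and then close with an inductive uniqueness argument based on the coercivity estimate of Proposition~\ref{BR_Exist}. You are in fact slightly more explicit than the paper in also bounding $|V_{\ddelta}^{m}|_{\infty,h}$, a step the paper needs (so that $\gff_{\ddelta}(V_{\ddelta}^{n})=V_{\ddelta}^{n}$ when matching \eqref{BR_CB5} with \eqref{BRS_3}) but does not spell out in~\eqref{BR_crucial_1}.
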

%
%
%
%
\begin{proof}
Since $\ddelta\ge 2\,\max\{g_{\star},u_{\star}\}$, the convergence estimates
\eqref{BR_CB_cnv_3} and \eqref{BR_CB_cnv_2},
the discrete Sobolev inequality \eqref{LH1}
and the mesh size conditions \eqref{BR_Xmesh} imply that the (MBRFD) are
well-defined and
\begin{equation*}
\begin{split}
\big|\Phi_{\ddelta}^{n+\half}\big|_{\infty,h}\leq&\,\big|g(\uu^{n+\half})
-\Phi^{n+\half}_{\ddelta}\big|_{\infty,h}+\big|g(\uu^{n+\half})|_{\infty,h}\\
\leq&\,{\sqrt{\sf L}}\,\big|g(\uu^{n+\half})
-\Phi^{n+\half}_{\ddelta}\big|_{1,h}+g_{\star}\\
\leq&\,{\sf C}_{\ddelta}^{\ssy{\sf BCV},4}\,\sqrt{{\sf L}}\,(\tau^2+h^{2})
+\tfrac{\ddelta}{2}\\
\leq&\,\ddelta,
\quad n=0,\dots,N-1,
\end{split}
\end{equation*}
and
\begin{equation*}
\begin{split}
\big|V_{\ddelta}^{\half}\big|_{\infty,h}\leq&\,\big|\uu^{\half}
-V^{\half}_{\ddelta}\big|_{\infty,h}+\big|\uu^{\half}|_{\infty,h}\\
\leq&\,{\sqrt{\sf L}}\,\big|\uu^{n+\half}
-V^{\half}_{\ddelta}\big|_{1,h}+u_{\star}\\
\leq&\,{\sf C}_{\ddelta}^{\ssy{\sf BCV},2}\,\sqrt{{\sf L}}
\,(\tau^2+\tau^{\half}\,h^{2})+\tfrac{\ddelta}{2}\\
\leq&\,\ddelta,
\end{split}
\end{equation*}
which, along with \eqref{ni_defin}, yield
\begin{equation}\label{BR_crucial_1}
\gff_{\ddelta}(V^{\half}_{\ddelta})=V^{\half}_{\ddelta},\quad
\gff_{\ddelta}\big(\Phi_{\ddelta}^{n+\half}\big)
=\Phi_{\ddelta}^{n+\half},\quad n=0,\dots,N-1.
\end{equation}
Thus, the (MBRFD) approximations are
(BRFD) approximations when $\delta=\ddelta$, i.e.
\eqref{BRS_1}-\eqref{BRS_4} hold after replacing
$\UUU^{\half}$ by $V_{\ddelta}^{\half}$,
$\UUU^n$ by $V^n_{\ddelta}$ for $n=0,\dots,N$, and
$\Phi^{n+\half}$ by $\Phi^{n+\half}_{\ddelta}$ for $n=0,\dots,N-1$.
\par
Let $\UUU^{\half}$, $(\UUU^n)_{n=0}^{\ssy N}$ and $(\Phi^{n+\half})_{n=0}^{\ssy N-1}$
be approximations derived by the (BRFD) method. Then, we introduce the errors
${\mathfrak q}^{\half}:=V^{\half}_{\ddelta}-W^{\half}$,
${\mathfrak q}^n:=V_{\ddelta}^n-W^n$ for $n=0,\dots,N$, and
${\mathfrak q}^{n}_{\star}:=\Phi_{\ddelta}^{n+\half}-\Phi^{n+\half}$ for
$n=0,\dots,N-1$.
Since $\tau\,{\sf C}_{\ddelta}^{\ssy{\sf BR, I}}\leq\tfrac{1}{2}$
and $\ddelta\ge g_{\star}\ge g_{\star}^0$, Remark~\ref{remark_A} and \eqref{BRS_1} yield
${\mathfrak q}^0=0$, ${\mathfrak q}^{\half}=0$ and ${\mathfrak q}^{0}_{\star}=0$.
%
%
Now, we assume that for a given $m\in\{0,\dots,N-1\}$ it holds that ${\mathfrak q}^m=0$
and ${\mathfrak q}_{\star}^{m}=0$. Subracting \eqref{BRS_4} from
\eqref{BR_CB6} (or \eqref{BRS_2} from \eqref{BR_CB4} when $m=0$),
and then using \eqref{BR_crucial_1}, we obtain
\begin{equation}\label{BR_exu_DR_1}
{\mathfrak q}^{m+1}=\tfrac{\tau}{2}\,\Delta_h{\mathfrak q}^{m+1}
+\tfrac{\tau}{2}\,\left[\gff_{\ddelta}\big(\Phi_{\ddelta}^{m+\half}\big)
\otimes{\mathfrak q}^{m+1}\right].
\end{equation}
Next, taking the inner product $(\cdot,\cdot)_{0,h}$ with ${\mathfrak q}^{m+1}$ and
then using \eqref{NewEra2}, the Cauchy-Schwarz inequality, \eqref{ni_defin}
and the definion of ${\sf C}_{\ddelta}^{\ssy{\sf BR, I}}$, we get
\begin{equation*}
\begin{split}
0=&\,\|{\mathfrak q}^{m+1}\|_{0,h}^2+\tfrac{\tau}{2}\,|{\mathfrak q}^{m+1}|_{1,h}^2
-\tfrac{\tau}{2}\,\big(\gff_{\ddelta}(\Phi_{\ddelta}^{m+\half})
\otimes{\mathfrak q}^{m+1},{\mathfrak q}^{m+1}\big)_{0,h}\\
\ge&\,\tfrac{\tau}{2}\,|{\mathfrak q}^{m+1}|_{1,h}^2
+2\,\|{\mathfrak q}^{m+1}\|_{0,h}^2
\,\left(\tfrac{1}{2}-\tfrac{\tau}{4}\,\sup_{\ssy\rset}|\gff_{\ddelta}|\,\right)\\
\ge&\,\tfrac{\tau}{2}\,|{\mathfrak q}^{m+1}|_{1,h}^2
+2\,\|{\mathfrak q}^{m+1}\|_{0,h}^2
\,\left(\tfrac{1}{2}-\tau\,{\sf C}_{\ddelta}^{\ssy{\sf BR, I}}\,\right)\\
\ge&\,\tfrac{\tau}{2}\,|{\mathfrak q}^{m+1}|_{1,h}^2,
\end{split}
\end{equation*}
which, obviously, yields that ${\mathfrak q}^{m+1}=0$. When $m\leq N-2$, observing that
\begin{equation*}
{\mathfrak q}_{\star}^{m+1}=2\,\left[g(V^{m+1}_{\ddelta})-g(\UUU^{m+1})\right]
-{\mathfrak q}_{\star}^{m},
\end{equation*}
we arrive at ${\mathfrak q}_{\star}^{m+1}=0$. The induction argument above, shows that,
under our assumptions the (BRFD) approximations are those derived from of the (MBRFD) scheme
when $\delta=\ddelta$, and thus the error estimates \eqref{BR_true_2} and \eqref{BR_true_1}
follow as a natural outcome of \eqref{BR_CB_cnv_3}, \eqref{BR_CB_cnv_1}
and \eqref{BR_CB_cnv_2}.
\end{proof}
\begin{remark}
Let us make the choice $\Phi^{\half}=g(u^0)$  (see \cite{Besse1}, \cite{Besse2})
instead of \eqref{BRS_13}. Then, we obtain
$\|\btheta^0\|_{0,h}={\mathcal O}(\tau)$, $\|\btheta^1\|_{0,h}={\mathcal O}(\tau)$
and ${\sf Z}^2={\mathcal O}(\tau\,(\tau+h^2)^2)$. Thus, from \eqref{BR_CB_Gat10}
we arrive at a suboptimal error estimate of the form ${\mathcal O}(\tau+h^2)$.
Here, we skip the problem
by introducing \eqref{BRS_12} (cf. \cite{Georgios1}) that derives a higher order approximation $\Phi^{\half}$ of
$g(u(t_1,\cdot))$.
\end{remark}
%
%
%
%
%
%
%
%

\end{document}